\documentclass[reqno]{article}
\usepackage{amssymb,amsmath}
\usepackage{amsbsy}
\usepackage{color}
\usepackage[latin1]{inputenc}

\def\hnc      {{\boldsymbol {\mathcal{H}}}}

\def\v        {{\boldsymbol v}}
\def\u        {{\boldsymbol u}}
\def\w        {{\boldsymbol w}}

\def\z        {{\boldsymbol z}}
\def\f        {{\boldsymbol f}}
\def\h        {{\boldsymbol h}}
\def\q        {{\boldsymbol q}}

\def\be       {{\boldsymbol \beta}}

\def\nchi     {{\boldsymbol \chi}}

\def\vph      {{\boldsymbol \varphi}}
\def\nnu      {{\boldsymbol \nu}}
\def\te       {{\boldsymbol \theta}}
\def\kxi      {{\boldsymbol \xi}}
\def\Q        {{\boldsymbol Q}}
\def\L        {{\boldsymbol L}}

\def\H        {{\boldsymbol H}}

\def\Q        {{\boldsymbol Q}}

\newcommand{\disp}{\displaystyle}

\numberwithin{equation}{section}

\newtheorem{theorem}{Theorem}[section]
\newtheorem{proposition}{Proposition}[section]

\newenvironment{proof}{{\bf  Proof}:}{\hfill $\square$\newline}
\newtheorem{definition}{Definition}[section]

\newtheorem{remark}{Remark}[section]
\evensidemargin -6mm \oddsidemargin 2mm \textwidth 16cm \textheight
23cm \topmargin -10mm

\begin{document}

\title{ Remarks on Hierarchic Control  for a
Linearized Micropolar Fluids System  in Moving Domains}

\author
{\quad Isa\'ias Pereira de Jesus\thanks{ Corresponding author:
Dpto.~Matem\'atica, Universidade Federal do Piau\'i,\,\, Campus\;\;\;  Ministro
Petrônio Portela - Ininga, 64049-550, Teresina, PI, Brasil, {\tt
isaias@ufpi.edu.br}.}}

\date{}
\maketitle

\begin{abstract} We study a Stackelberg strategy subject to the evolutionary linearized micropolar
fluids equations in domains with moving boundaries, considering  a Nash
multi-objective equilibrium (non necessarily cooperative) for the "follower players"
(as is called in the economy field) and an optimal problem for the leader player
with approximate controllability objective. We will obtain the following  main
results : the existence and uniqueness of Nash equilibrium and its
characterization, the approximate controllability of the linearized micropolar
system with respect to the leader control and the existence and uniqueness  of
the Stackelberg-Nash problem, where the optimality system for the leader is
given.
\end{abstract}

\noindent{\bf Mathematics Subject Classification 2000:} 35K20, 93B05, 76D55.

\noindent{\bf Keywords:} Micropolar fluids, Stackelberg-Nash Strategies,
Hierarchic control.

\section{Introduction}
In recent years, much attention has been given  to the  investigation of  new
classes of problems in differential equations of hydrodynamics. Control problems
for the Navier-Stokes  equations and other models of fluid mechanics are
examples of these. A number of papers and books (see, for instance \cite {AA},
\cite {J}, \cite {Bli},  \cite {FCG1}, \cite {Fur}, \cite {MD}, \cite {LZ2}, \cite {St} and
references therein) deal with the theoretical and numerical study of the above
mentioned problems.

When considering vectorial or multi-objective problems it is not usual to have the
existence of a simultaneously optimal point. Several concepts of solution have
been introduced in economy and game theory. For example, we could consider
the following:

1.  Nash (non-cooperative ) equilibrium solutions (see \cite {N}),

 2. Stackelberg (hierarchic and cooperative) solutions (see \cite {Sb}),

3. Pareto (strong) solutions (see \cite {P1}).

Although the Pareto concept was the first solution introduced in the literature and
it is based on a vectorial argument, it is often difficult to analyze it
mathematically. On the other hand, the other two concepts, based on scalar
optimization (i.e. relying on the idea of comparing with controls associated to
scalar functionals), have been also considered.

Up to now, the main works on solution of these kinds for (vectorial) optimal
control with PDE constraints are the following :

1. The papers by J.-L. Lions \cite{L10, L11, L12}, where Pareto and Stackelberg
solutions are considered (using a Pareto related with a continuum of objective
functionals, and not a finite number of them). Díaz \cite{D1} and Díaz and  J.-L.
Lions \cite{DL}, have also given some results for Stackelberg-Nash strategies
with linear parabolic PDE constraints. We can also mention  Limaco et al. \cite
{LIM} in which the authors present Stackelberg-Nash equilibrium in non
cylindrical domains.

2. The papers by Ramos et al. \cite{RA1, RA2} , where the authors analyze the
Nash equilibrium for constraints given by linear parabolic and Burges equations
from the mathematical and numerical viewpoints.

In this article we study hierarchic control for two-dimensional incompressible
time-dependent linearized micropolar fluids in a domain of $\mathbb{R}^2_x
\times \mathbb{R}_t$ whose boundary is moving with respect to $t$, for $t \in
[0,T]$ and $T>0$. More precisely, we consider an open bounded domain
$\widehat{Q}$ of $\mathbb{R}^2_x \times \mathbb{R}_t$ which is the union of
open bounded sets $\Omega_t \subset \mathbb{R}^2_x$ where $\Omega_t$ are
deformations of a fixed set $\Omega$ of $\mathbb{R}^2_x$ by a diffeomorphism
$\tau_t$ to be defined below. From now on, we will write $\mathbb{R}^2$ instead
of $\mathbb{R}^2_x$.

Thus, let $\Omega$ be a  fixed, non-empty, open bounded set of
$\mathbb{R}^2$, whose points are represented by $y=(y_1,y_2)$.

Let $\Omega_t$ be the diffeomorphic image of $\Omega$ by the matrix valued
function
$$
\begin{array}{ccc}
[0,T]&\to &\mathbb{R}^{2^2} \\
t &\longmapsto & K(t).
\end{array}
$$

The points of $\Omega_t$ are represented by $x=(x_1,x_2)$. Thus we have
$$x=K(t)y,\;\;\mbox{for}\;\;i=1,2.$$

The non-cylindrical domain $\widehat{Q}$ of $\mathbb{R}^2_x \times
\mathbb{R}_t$ is defined by
$$\widehat{Q}=\displaystyle \bigcup_{0\leq t\leq T} \{\Omega_t \times \{t\}\}.$$

If the boundary of $\Omega_t$ is $\Gamma_t$, then the lateral
boundary of $\widehat{Q}$ is
$$\widehat{\Sigma}=\displaystyle \bigcup_{0\leq t\leq T} \{\Gamma_t \times \{t\}\}.$$

We represent by $Q$ the cylinder $Q=\Omega \times [0,T[$, with
lateral boundary $\Sigma$ given by $\Sigma=\Gamma \times [0,T[$,
where $\Gamma$ is the boundary of $\Omega$.

We assume the following hypothesis on $K(t)$:
$$K(t)=k(t)M,$$
where $k$ is a real function $k:[0,T]\rightarrow\mathbb{R}$, such that $k\in
C^2([0,T]),\,k(t)\geq k_0>0$, for some constant $k_0$  and $M$ is a constant
invertible $2\times 2$ matrix.

Thus we have a natural diffeomorphism $\displaystyle \tau_t:Q \to \widehat{Q}$ defined by
$$(y,t) \in Q \to (x,t) \in
\widehat{Q},\;\;\mbox{where}\;\;x=K(t)(y).$$

We represent by ${\mathcal{O}}_1,{\mathcal{O}}_2 \mbox { and } \mathcal{O}$, non-empty disjoint open subsets of $\Omega$. It means, $\mathcal{O}\cap{\mathcal{O}}_i$, for $i=1,2$, and ${\mathcal{O}}_i\cap{\mathcal{O}}_j$ empties for $i\neq j$.

We denote by ${\mathcal{O}}_t,{\mathcal{O}}_{it},\;i=1,2$, the images of
$\mathcal{O},{\mathcal{O}}_i$, by the mapping $\tau_t$. We suppose the
intersections
${\mathcal{O}}_t\cap{\mathcal{O}}_{it},\;{\mathcal{O}}_{it}\cap{\mathcal{O}}_{jt},\;i\neq
j,$ empties for $1\leq i,j\leq 2 $. In these conditions, we define the non-cylindrical
domains
$$\displaystyle{\widehat{\mathcal{O}}}=\bigcup_{0<t<T}\{{\mathcal{O}}_t\times\{t\}\},\;\;{\widehat{\mathcal{O}}_{1}}=\bigcup_{0<t<T}
\{{\mathcal{O}}_{1t}\times\{t\}
\},{\widehat{\mathcal{O}}_{2}}=\bigcup_{0<t<T}\{{\mathcal{O}}_{2t}\times\{t\}\}$$ which are contained in ${\widehat{Q}}$ and are images by $\tau_t$ of the cylinders of $Q$ with bases on $\mathcal{O}$ and ${\mathcal{O}}_1,{\mathcal{O}}_2$.

This paper was inspired on the ideas contained in the work of Jesus and
Menezes \cite{Isa},  where we investigate a similar question of hierarchic control
employing the Stackelberg-Nash strategy,  where the dynamic of the system is
given by a PDE from fluid mechanics.  The differences of the results obtained in
\cite {Isa} for this paper are in the existence and uniqueness of Nash equilibrium
( see Proposition \ref {pro2.1}) and in the optimality system for the leader ( see
Theorem \ref {theor3}). We assume that we can act in the dynamic of a
linearized micropolar fluid problem in the  velocities and pressure formulation by
a hierarchy of controls. According to the formulation given by H. Von Stackelberg
in \cite{Sb}, there are local controls, called followers and global controls, called
leaders. In fact, several followers are considered with their  own objectives and
the leader has an approximate controllability objective admitting Nash equilibrium
for the followers.

A micropolar fluid is a viscous, non-Newtonian fluid with local microstructure,
which contains suspensions of rigid particles. A mathematical description of
such fluids, which cannot be described, rheologically, by classical Navier-Stokes
systems (especially when the diameter of the domain of flow becomes small),
was first given by Eringen \cite{E}. Animal blood, liquid crystals (with dumbbell
type molecules), polymeric fluids, and certain colloidal fluids, whose fluid
elements exhibit microrotations and complex biological structures, are examples
of fluids modeled by micropolar fluid theory (see Eringen \cite{E2}  and Popel et
al. \cite{P}). Various problems associated with the micropolar fluid model have
been studied recently (see, e.g., Calmelet and Rosenhaus \cite{C}, Lukaszewicz
\cite{L}, Yamaguchi\cite{Y}). For the optimal control problems, see Stavre
\cite{St}, in which the author controls the blood pressure.

We consider the following linearized (around $(\widehat{\h},\widehat{\te})$)
micropolar fluid system
\begin{equation}
\left\{
\begin{array}
[c]{lll}%
\widehat{\z}_{t}-\Delta\widehat{\z}+(\widehat{\h}\cdot\nabla)\widehat{\z} +(\widehat{\z}\cdot\nabla)\widehat{\h}+\nabla
\widehat{p}=\nabla\times \widehat{w}+\widehat{\f}\widehat{\nchi}_{\mathcal{\widehat{O}}}+\widehat{\v}^{(1)}\widehat{\nchi}_{\mathcal{\widehat{O}}_{1}}
+\widehat{\v}^{(2)}\widehat{\nchi}_{\mathcal{\widehat{O}}_{2}} & \mbox{\rm in} & \widehat{Q},\\
\widehat{w}_{t}-\Delta \widehat{w}+\widehat{\h}\cdot\nabla \widehat{w}+\widehat{\z}\cdot\nabla\widehat{\te}
=\nabla\times\widehat{\z}+\widehat{g}\widehat{\chi}_{\mathcal{\widehat{{O}}}}+\widehat{u}^{(1)}\widehat\chi_{\mathcal{\widehat{{O}}%
}_{1}}+\widehat{u}^{(2)}\widehat\chi_{\mathcal{\widehat{{O}}}_{2}} & \mbox{\rm in} & \widehat{Q},\\
\nabla\cdot\widehat{\z}=0 & \mbox{\rm in} & \widehat{Q},\\
\widehat{\z}=0,\quad \widehat{w}=0 & \mbox{\rm on} & \widehat{\Sigma},\\
\widehat{\z}(0)=\widehat{\z}^{0},\quad \widehat{w}(0)=\widehat{w}^{0} & \mbox{\rm in} & \Omega_0,
\end{array}
\right.  \label{2}%
\end{equation}
where $(\widehat{\h},\widehat{\te})$ belongs to the class
\begin{equation}
\left\{
\begin{array}
[c]{l}%
\left(  \widehat{\h},\widehat{\te}\right)  \in \textbf{L}^{\infty}\left(  \widehat{Q}\right)  \times L^{\infty
}\left(  \widehat{Q}\right)  ,\\
\left(  \widehat{\h}_{t},\widehat{\te}_{t}\right)  \in L^{2}\left(  0,T;\textbf{L}^{r}\left(
\Omega_t\right)  \right)  \times L^{2}\left(  0,T;L^{r}\left(  \Omega_t\right)
\right),  \quad\text{for some}\quad r>1.
\end{array}
\right.  \label{en14}%
\end{equation}

Spaces of $\mathbb{R}^2$-valued functions, as well as their elements, are represented by bold-face letters.

In the system (\ref{2}), $\widehat{\z}=\widehat{\z}(x,t)=(\widehat
{\z_{1}}(x,t),\widehat {\z_{2}}(x,t))$ represents the fluid velocity field,
$\widehat{w}=\widehat{w}(x,t)$ is the angular velocity and $\widehat{z}^{0}$ and
$\widehat{w}^{0}$ are,  respectively,   the
velocity and angular velocity  at time $t=0$. The subset $\mathcal{\widehat{O}}%
\subset\Omega_t$ is the control domain (which is supposed to be as small as
wished), $\mathcal{\widehat{{O}}}_{1}$,
$\mathcal{\widehat{{O}}}_{2}\subset\mathcal{\widehat{{O}}}$ the secondary
control domains, the functions $\widehat{\f}$, $\widehat{g}$ are called leaders
controls and $\widehat{\v}^{(i)}$, $\widehat{u}^{(i)}$ $(i=1,2)$ the followers
controls. The quantities $(\widehat{\z}\cdot\nabla)\widehat{\z}$,
$\nabla\times\widehat{\z}$
and $\nabla\times \widehat{w}$ are defined, respectively,  by%
\[
(\widehat{\z}\cdot\nabla)\widehat{\z}=(\widehat{\z}\cdot\nabla \widehat{z}_{1},\widehat{\z} \cdot\nabla \widehat{z}_{2}), \;\;\;\;\
\nabla\times\widehat{\z}=\frac{\partial \widehat{z}_{2}}{\partial x_{1}}-\frac{\partial
\widehat{z}_{1}}{\partial x_{2}}\quad\mbox{\rm and\quad}\nabla\times \widehat{w}=(
\frac{\partial \widehat{w}}{\partial x_{2}},-\frac{\partial \widehat{w}}{\partial
x_{1}}) .
\]
By $\widehat{\nchi}_{\widehat{\mathcal{O}}},\widehat{\nchi}_{\widehat{\mathcal{O}}_{it}},i=1,2,$ we denote the characteristic functions on $\widehat{\mathcal{O}}$ and ${\widehat{\mathcal{O}}}_{it}$.

We assume  that the objective of the leader is of controllability type. In fact, the
leader wants to drive the state velocity $(\widehat{\z}, \widehat{w})$ at the final
time T as close as possible to a wished state $ (\widehat{\z}^T, \widehat{w}^T)$,
without a big cost for the pair of controls $(\widehat{\f}, \widehat{g})$. On the
other hand, the main objective of the followers acting on the pair of controls
$(\widehat{\v}^{\left( i\right) }, \widehat{u}^{\left(  i\right)  })$ $(i=1,2)$, is to hold
the state $(\widehat{\z}, \widehat{w})$ near to a desired state $(
\widehat{\z}_{i,d}, \widehat{w}_{i,d})$ for all time $ t \in (0,T)$ in an observability
region ${\mathcal{\widehat{O}}}_{i,d}$, without a big cost for the pair of controls
$(\widehat{\v}^{\left(  i\right)  }, \widehat{u}^{\left( i\right)  })$.

Concerning to control results for micropolar fluids, we cite the work by
Fern\'andez-Cara and Guerrero \cite{FCG}, where they studied the local exact
controllability of trajectories, and \cite{A}, where it was obtained a global exact
controllability for Galerkin's approximations of this system.  We mention also
Araruna et al.  \cite {A1} in which the authors present the approximate
controllability of Stackelberg-Nash strategies for linearized micropolar fluids.

In this paper, we present  the following  results : the existence and uniqueness of
Nash equilibrium and its characterization, the approximate controllability of the
linearized micropolar system with respect to the leader control and the existence
and uniqueness  of the Stachelberg-Nash problem, where the optimality system
for the leader is given.

The rest of the paper is organized as follows. In Section \ref{Sec2.0} we present
the problem.  Section \ref{Sec2} is devoted to establish existence, uniqueness
and characterization  of the Nash equilibrium. In  Section \ref{Sec3} we study the
approximate controllability with respect to the leader control. In Section
\ref{Sec4} we give the optimality system for the leader.  Finally, in Section
\ref{sec5} we present the conclusions.

\section{Problem formulation}\label{Sec2.0} The following vector spaces, usual in the context of incompressible
fluids, will be used along the paper:
\[
\mathbf{V}_t=\left\{\vph\in\mathbf{H}_{0}^{1}(\Omega_t
);\;\nabla\cdot\vph=0\;\mbox{\rm in}\;\Omega_t\right\}
\]
and%
\[
\mathbf{H}_t=\left\{
\vph\in\mathbf{L}^{2}(\Omega_t);\;\nabla
\cdot\vph=0\;\mbox{\rm
in}\;\Omega_t,\;\vph\cdot\mathbf{\nnu }=0\;\mbox{\rm
on}\;\Gamma_t\right\}  .
\]%

We will denote by $\displaystyle \mathbf{\nnu }$ the outward unit normal to
$\Omega_t$ at the point $ x \in \Gamma_t$. Let $\mathcal{\widehat{{O}}}_{1,d}$,
$\mathcal{\widehat{{O}}}_{2,d}$ be open subsets of $\Omega_t$, representing
the observability domains for the  followers, which are localized in an arbitrary
manner in $\Omega_t$.

{\bf Cost functionals in non-cylindrical $\widehat{\Q}.$\;\;}Associated to the
solution $(\widehat{\z},\widehat{w})$ of (\ref{2}), we will consider the following
(secondary) functionals
\begin{equation}
\widehat{J}_{i}\left(\widehat{\f}, \widehat{g};\widehat{\v},\widehat{u}\right)  =\frac{\widehat{\alpha}_{i}}{2}\int_{0}%
^{T}\int_{\mathcal{\widehat{O}}_{i,d}}\left\vert \widehat{\z}-\widehat{\z}_{i,d}\right\vert
^{2}dxdt+\frac{\widehat{\mu}_{i}}{2}\int_{0}^{T}\int_{\mathcal{\widehat{O}}_{i}}\vert
\widehat{\v}^{(i)}\vert ^{2}dxdt,\quad i=1,2, \label{3}%
\end{equation}%
\begin{equation}
\widetilde{\widehat{J}_{i}}\left(\widehat{\f}, \widehat{g};\widehat{\v},\widehat{u}\right)  =\frac{\widetilde{\widehat{\alpha}_{i}}}%
{2}\int_{0}^{T}\int_{\mathcal{\widehat{O}}_{i,d}}\left\vert
\widehat{w}-\widehat{w}_{i,d}\right\vert
^{2}dxdt+\frac{\widetilde{\widehat{\mu}_{i}}}{2}\int_{0}^{T}\int_{\mathcal{\widehat{O}}_{i}%
}\vert \widehat{u}^{(i)}\vert ^{2}dxdt,\quad i=1,2, \label{4}%
\end{equation}
and the (main) functional
\begin{equation}
\widehat{J}\left(\widehat{\f}, \widehat{g}\right)  =\frac{1}{2}\int_{0}^{T}\int_{\mathcal{\widehat{O}}}
\big(|\widehat{\f}|^2 + |\widehat{g}|^2\big)  dxdt, \label{5}%
\end{equation}
where $\widehat{\v}=(\widehat{\v}^{(1)},\widehat{\v}^{(2)}) $, $\widehat{u}=(\widehat{u}^{(1)},\widehat{u}^{(2)}) $,
$\widehat{\alpha_{i}},\widetilde{\widehat{\alpha}}_{i}>0$, $\widehat{\mu}_{i},\widetilde
{\widehat{\mu}}_{i}>0$ are constants, and $\widehat{\z}_{i,d},$ $\widehat{w}_{i,d}$ are given
functions in $L^2(\mathcal{\widehat{O}}_{i,d} \times (0,T)).$

\begin{remark}\label{bdfnc} From the regularity and uniqueness of the solution $\disp (\widehat{\z}, \widehat{w})$ of (\ref{2}) (see Remark \ref{rsol} ) the cost functionals $\widehat{{J}}_{i}$, $\widetilde{\widehat{J}_{i}}$ and $\widehat{{J}}$ are well defined.
\end{remark}

The followers $\widehat{\v}^{\left(  i\right)  },$ $\widehat{u}^{\left(  i\right)  }$ $(i=1,2)$
assume that the leaders have made choices $\widehat{\f}$ and $\widehat{g}$ and they, respectively,
try to find a Nash equilibrium of their cost $\widehat{J}_{i}$ and $\widetilde{\widehat{J}_{i}}$
$(i=1,2),$ i.e. they look for controls $\widehat{\v}^{\left(  i\right)  },$ $\widehat{u}^{\left(
i\right)  }$ $(i=1,2)$, depending on $\widehat{\f}$ and $\widehat{g}$, such that%
\begin{equation}
\widehat{J}_{1}(  \widehat{\f},\widehat{g};\widehat{\v},\widehat{u})  =\text{~}\underset{\widetilde{\widehat{\v}}^{\left(  1\right)  }}%
{\min}\text{~}\widehat{J}_{1}(  \widehat{\f},\widehat{g};\widetilde{\widehat{\v}}^{\left(  1\right)  },\widehat{\v}^{\left(  2\right)
},\widehat{u})  ,\quad \widehat{J}_{2}\left(  \widehat{\f},\widehat{g};\widehat{v},\widehat{u}\right)  =\text{~}\underset{\widetilde{\widehat
{\v}}^{\left(  2\right)  }}{\min}\text{~}\widehat{J}_{2}(  \widehat{\f},\widehat{g};\widehat{v}^{\left(  1\right)
},\widetilde{\widehat{\v}}^{\left(  2\right)  },\widehat{u})  \label{en6.1}%
\end{equation}
and%
\begin{equation}
\widetilde{\widehat{J}_{1}}\left(  \widehat{\f},\widehat{g};\widehat{\v},\widehat{u}\right)  =\text{~}\underset{\widetilde{\widehat{u}}^{\left(
1\right)  }}{\min}\text{~}\widehat{J}_{1}(  \widehat{\f},\widehat{g};\widehat{\v},\widetilde{\widehat{u}}^{\left(  1\right)
},\widehat{u}^{\left(  2\right)  })  ,\quad \widetilde{\widehat{J}}_{2}\left(  \widehat{\f},\widehat{g};\widehat{\v},\widehat{u}\right)
=\text{~}\underset{\widetilde{\widehat{u}}^{\left(  2\right)  }}{\min}\text{~}\widehat{J}_{2}(
\widehat{\f},\widehat{g};\widehat{\v},\widehat{u}^{\left(  1\right)  },\widetilde{\widehat{u}}^{\left(  2\right)  }).
\label{en7.1}%
\end{equation}
The pair $(\widehat{\v},\widehat{u})$ which satisfies (\ref{en6.1}) and (\ref{en7.1}) is called Nash equilibrium.

Notice that, since, for $(i=1,2)$,  the functionals $\widehat{J}_{i}$ and
$\widetilde{\widehat{{J}_{i}}}$ are convex, then $(\widehat{\v},\widehat{u})$ is a
Nash equilibrium if and only if
\begin{equation}
\displaystyle D_{\widetilde{\widehat{\v}}^{(i)}}\widehat{J}_{i}(  \widehat{\f},\widehat{g};\widehat{\v},\widehat{u}) = 0, \;\;\;\forall \widetilde{\widehat{\v}}^{(i)}  \in \mathbf{L}^{2}(  \mathcal{\widehat{O}}_{i}\times(0,T))
\label{en10}%
\end{equation}
and
\begin{equation}
\displaystyle D_{\widetilde{\widehat{\u}}^{(i)}}\widetilde{\widehat{J}_{i}}(  \widehat{\f},\widehat{g};\widehat{\v},\widehat{u}) = 0,  \;\;\;\forall \widetilde{\widehat{\u}}^{(i)}  \in {L}^{2}(  \mathcal{\widehat{O}}_{i}\times(0,T)),
\label{en11}
\end{equation}
where $\displaystyle D_{\widetilde{\widehat{\v}}^{(i)}}\widehat{J}_{i}$ (respec.
$\displaystyle D_{\widetilde{\widehat{\u}}^{(i)}}\widetilde{\widehat{J}_{i}})$  is a
Gateaux derivative of $\widehat{J_{i}}$ (respec. $\widetilde{\widehat{J}_{i}})$ with
respect to ${\widetilde{\widehat{\v}}^{(i)}}$ (respec.
${\widetilde{\widehat{\u}}^{(i)}}).$

After  finding the Nash equilibrium for each $(\widehat{\f},\widehat{g})$, we  look
for any optimal control $(\overline{\widehat{\f}},\overline{\widehat{g}})$ such that
\begin{equation}
J(  \overline{\widehat{\f}},\overline{\widehat{g}})  =\text{~}\underset{(
\widehat{\f},\widehat{g})  }{\min}\text{~}J\left(  \widehat{\f},\widehat{g}\right),  \label{en8}%
\end{equation}
subject to the restriction of the approximate controllability type%
\begin{equation}
(\widehat{\z}(\cdot,T,\widehat{\f},\widehat{g},\widehat{\v},\widehat{u}), \widehat{w}(\cdot,T,\widehat{\f},\widehat{g},\widehat{\v},\widehat{u}))\in B_{\mathbf{L}^{2}(\Omega_t)}(\widehat{\z}^{T}%
,\epsilon)\times B_{L^{2}(\Omega_t)}(\widehat{w}^{T},\epsilon),\label{en9}%
\end{equation}
where $B_X(C, \epsilon)$ denotes the ball in $X$ with center $C$ and radius
$\epsilon$.

The methodology, in the present paper, consists in to change the non-cylindrical
state equation (\ref{2}) to a cylindrical one (\ref{eq10}) by the diffeomorphism
$\tau_t$. Its inverse is given by
$$\tau_t^{-1}: \widehat{Q} \to Q \mbox{ such that } (x,t) \in \widehat Q \to (y,t) \in Q,\;\;\mbox{with}\;\;y=K^{-1}(t)(x).$$

We set
\begin{equation}\label{mva}\left|
\begin{array}{l}
\widehat{\z}(x,t)=\z(K^{-1}(t)x,t)\\[3pt]\displaystyle
\widehat{w}(x,t)=w(K^{-1}(t)x,t)\\[3pt]\displaystyle
\widehat{p}(x,t)=p(K^{-1}(t)x,t).
\end{array}\right.
\end{equation}

In view of (\ref{mva}) and employing the notation
$$K(t)=k(t)M=(\alpha_{ij}(t)),\;\;K^{-1}(t)=\displaystyle\frac{1}{k(t)}M^{-1}=(\beta_{ij}(t)).$$
$$x=K(t)y,\;\;y=K^{-1}(t)x$$
 $$ \displaystyle
x_r=\sum^{2}_{j=1}\alpha_{rj}(t)y_j,\;\;\displaystyle
y_l=\sum^{2}_{r=1}\beta_{lr}(t)x_r,
$$
we obtain the following identities
$$\displaystyle\frac{\partial y_l}{\partial
t}=\sum^2_{r,j=1}\beta'_{lr}(t)\alpha_{rj}(t)y_j\mbox{  and
}\displaystyle\frac{\partial y_l}{\partial x_j}=\beta_{lj}(t).$$

Then
 \begin{equation}\begin{array}{l}
\displaystyle\frac{\partial \widehat{\z}_i}{\partial
t}(x,t)=\sum^{2}_{j,l,r=1}\beta'_{lr}(t)\alpha_{rj}(t)y_j\frac{\partial
\z_i}{\partial y_l}(y,t)+\frac{\partial \z_i}{\partial t}(y,t).
\label{eqA1}
\end{array}
\end{equation}

By (\ref{eqA1}) we have
\begin{equation}\begin{array}{l}\displaystyle\frac{\partial \widehat{\z}}{\partial
t}(x,t)=-yK'(t)K^{-1}(t)\nabla \z+\z_{t},
\label{eqA2}
\end{array}
\end{equation}
because $K(t).(K'(t))'=-k'(t)k^{-1}(t)$. As $\displaystyle\frac{\partial y_l}{\partial x_j}=\beta_{lj}(t)$,
we obtain
\begin{equation}
\displaystyle\frac{\partial \widehat{\z}_i}{\partial
x_j}(x,t)=\sum^{2}_{l=1}\beta_{lj}(t)\frac{\partial \z_i}{\partial
y_l}(y,t), \label{eqA3}
\end{equation}
that implies
$$\displaystyle\frac{\partial^2 \widehat{\z}_i}{\partial x_j^2}(x,t)=\sum^{2}_{l,r=1}\beta_{lj}(t)
\beta_{rj}(t)\frac{\partial^2 \z_i}{{\partial y_l}{\partial
y_r}}(y,t).$$

Thus,
\begin{equation}
\displaystyle\Delta
\widehat{\z}_i(x,t)=\sum^{2}_{j,l,r=1}\beta_{lj}(t)
\beta_{rj}(t)\frac{\partial^2
\z_i}{{\partial y_l}{\partial y_r}}(y,t), \label{eqA4}
\end{equation}
$$\displaystyle\nabla\cdot\widehat{\z}=\frac{1}{k(t)}\nabla\cdot(M^{-1}\z^T)$$

Furthermore,
$$\left(\widehat{\h}.\nabla\right)\widehat{\z}=(\h(K^{-1}(t))^T)\cdot\nabla)\z,\;\;\left(\widehat{\z}\cdot\nabla\right)\widehat{\h}=(\z(K^{-1}(t))^T).\nabla)\h$$
$$\nabla\widehat{p}=\nabla p(y,t)K^{-1}(t),\;\;
\nabla\times\widehat{\w}=\overline{K^{-1}(t)}\nabla\times \w,$$ where $\overline{K^{-1}(t)}$ is the cofactor matrix of $K^{-1}(t).$

Also,
$$\widehat{\f}\nchi_{\widehat{\mathcal{O}}}=\f\nchi_{\mathcal{O}\times[0,T]},\;\;\widehat{\v}^{(1)}\nchi_{\widehat{\mathcal{O}}_1}=\v^{(1)}\nchi_{\mathcal{O}_1\times[0,T]}$$
$$\displaystyle\frac{\partial \widehat{w}}{\partial
t}(x,t)=-yK'(t)K^{-1}(t)\nabla w+w_{t},\;\;\Delta
\widehat{w}(x,t)=\sum^{2}_{j,l,r=1}\beta_{lj}(t)
\beta_{rj}(t)\frac{\partial^2
w}{{\partial y_l}{\partial y_r}}(y,t),$$
$$\widehat{\h}\cdot\nabla\widehat{w}=\h(K^{-1}(t))^T\cdot\nabla w,\;\;\widehat{w}\cdot\nabla\widehat{\te
}=z(K^{-1}(t))^T\cdot\nabla\te,$$
$$\nabla\times\widehat{\z}=\displaystyle\sum_{i=1}^2\beta_{ii}\nabla\times \z+\sum_{i,j=1}^2(-1)^{j+1}\beta_{ij}\frac{\partial \z_i}{\partial y_{3-j}}.$$

Therefore the change of variables $(\ref{mva})$, transforms the the
initial-boundary value problem (\ref{2}) into the equivalent system
\begin{equation}
\displaystyle\left|\begin{array}{l}
\displaystyle \z_t-yK'(t)K^{-1}(t)\nabla \z-\sum^{2}_{j,l,r=1}\beta_{lj}(t)
\beta_{rj}(t)\frac{\partial^2
\z}{{\partial y_l}{\partial y_r}}(y,t)+((\h(K^{-1}(t))^T)\cdot\nabla)\z
\\\displaystyle+(\z(K^{-1}(t))^T)\cdot\nabla)\h+\nabla p(y,t)K^{-1}(t)=\overline{K^{-1}(t)}\nabla\times w+\f\nchi_{\mathcal{O}\times[0,T]}\\+\v^{(1)}\nchi_{\mathcal{O}_1\times[0,T]}+\v^{(2)}\nchi_{\mathcal{O}_2\times[0,T]}
\hspace*{6,7cm}\mbox { in } Q
\\\displaystyle w_t-yK'(t)K^{-1}(t)\nabla w-\sum^{2}_{j,l,r=1}\beta_{lj}(t)
\beta_{rj}(t)\frac{\partial^2
w}{{\partial y_l}{\partial y_r}}(y,t)+\h(K^{-1}(t))^T\cdot\nabla w
\\+\z(K^{-1}(t))^T\cdot\nabla\te=\displaystyle\sum_{i=1}^2\beta_{ii}\nabla\times \z+\sum_{i,j=1}^2(-1)^{j+1}\beta_{ij}\frac{\partial \z_i}{\partial y_{3-j}}
\\+g\chi_{\mathcal{O}\times[0,T]}+\displaystyle u^{(1)}\chi_{\mathcal{O}_1\times[0,T]}+u^{(2)}\chi_{\mathcal{O}_2\times[0,T]}\hspace*{4.9cm}\mbox{ in } Q
\\\nabla\cdot(M^{-1}\z^T)=0\hspace*{8,8cm}\mbox{ in }Q
\\\z=0,\;\;w=0\hspace*{9.3cm}\mbox{ on } \Sigma
\\\z(0)= z^{0},\;\;\;\;w(0)=w^{0}\hspace*{7,85cm} \mbox{ in }\Omega\label{eq10}\end{array}\right.
\end{equation}
where $(\h,\te)$ belongs to the class
\begin{equation}
\left\{
\begin{array}
[c]{l}%
\left(  \h,\te\right)  \in \textbf{L}^{\infty}\left(  Q\right)  \times L^{\infty
}\left(  Q\right)  ,\\
\left(  \h_{t},\te_{t}\right)  \in L^{2}\left(  0,T;\textbf{L}^{r}\left(
\Omega\right)  \right)  \times L^{2}\left(  0,T;L^{r}\left(  \Omega\right)
\right)  \quad\text{for some}\quad r>1.
\end{array}
\right.  \label{en14}%
\end{equation}

\begin{remark}\label{rsol} Under the  hypothesis on $K(t)$, we conclude that the bilinear forms
$$\displaystyle \boldsymbol{\alpha}(\boldsymbol z, \boldsymbol \varrho)= \sum^{2}_{j,l,r=1}\beta_{lj}(t)\beta_{rj}(t)
\int_\Omega \frac{\partial\z}{{\partial y_r}}\frac{\partial \boldsymbol \varrho} {\partial y_l}(y,t)\;dy,$$
$$\displaystyle \alpha( z, \varrho)= \sum^{2}_{j,l,r=1}\beta_{lj}(t)\beta_{rj}(t)
\int_\Omega \frac{\partial z}{{\partial y_r}}\frac{\partial \varrho} {\partial
y_l}(y,t)\;dy$$ are bounded and coercive in $\boldsymbol H_0^1(\Omega) \times
\boldsymbol H_0^1(\Omega)$ and in $H_0^1(\Omega) \times H_0^1(\Omega)$,
respectively. By using a similar technique as  in Belmiloudi \cite{AB} (Chapter
12), we can prove the following:

For any $\disp \left(\f, \v^{(1)}, \v^{(2)}, g, u^{(1)}, u^{(2)}\right) \in
\left(\boldsymbol{L}^2(Q)\right)^3\times \left({L}^2(Q)\right)^3$ and any $\disp
(\z^0, w^0) \in \boldsymbol{H} \times L^2(\Omega)$, the system (\ref{eq10})
posses one solution $(\z,w)$ in the class
$$\disp (\z,w)\in \left( \boldsymbol{L}^\infty(0, T; \boldsymbol{H})\cap \boldsymbol{L}^2(0, T; \boldsymbol{V})   \right) \times \left( {L}^\infty(0, T; {H})\cap {L}^2(0, T; {V})   \right),$$

$\mathbf{V}=\left\{\vph\in\mathbf{H}_{0}^{1}(\Omega
);\;\nabla\cdot\vph=0\;\mbox{\rm in}\;\Omega\right\},\;
\mathbf{H}=\left\{
\vph\in\mathbf{L}^{2}(\Omega);\;\nabla
\cdot\vph=0\;\mbox{\rm
in}\;\Omega,\;\vph\cdot\mathbf{\nnu }=0\;\mbox{\rm
on}\;\Gamma\right\}  .
$

Using the diffeomorphism $(y,t) \to (x,t)$, from $Q$ to $\widehat{Q}$, we obtain a
unique solution $(\widehat{\z}, \widehat{w})$ for the problem (\ref{2}) with the
regularity
$$\disp (\widehat{\z}, \widehat{w})\in \left( \boldsymbol{L}^\infty(0, T; \boldsymbol{H}_t)\cap \boldsymbol{L}^2(0, T; \boldsymbol{V}_t)   \right) \times \left( {L}^\infty(0, T; {H}_t)\cap {L}^2(0, T; {V}_t)   \right).$$

\end{remark}

{\bf Cost functionals in  $\Q$.\;\;} From the diffeomorphism $\tau_t^{-1}$ which transform $\widehat{Q}$ in $Q$, we transform the cost functionals $\widehat{J}_{i}$, $\widetilde{\widehat{J}_{i}}$ and $\widehat{J}$ in the cost functionals ${J}_{i}$, $\widetilde{{J}_{i}}$ and ${J}$ defined by

\begin{equation}
{J}_{i}\left({\f}, {g};{\v},{u}\right)  =\frac{{\alpha}_{i}}{2}\int_{0}%
^{T}\int_{\mathcal{{O}}_{i,d}}|\det K(t)|\left\vert {\z}-{\z}_{i,d}\right\vert
^{2}dydt+\frac{{\mu}_{i}}{2}\int_{0}^{T}\int_{\mathcal{{O}}_{i}}|\det K(t)|\vert
{\v}^{(i)}\vert ^{2}dydt, \label{3c}%
\end{equation}%

\begin{equation}
\widetilde{{J}_{i}}\left({\f}, {g};{\v},{u}\right)  =\frac{\widetilde{{\alpha}_{i}}}%
{2}\int_{0}^{T}\int_{\mathcal{{O}}_{i,d}}|\det K(t)|\left\vert
{w}-{w}_{i,d}\right\vert
^{2}dydt+\frac{\widetilde{{\mu}_{i}}}{2}\int_{0}^{T}\int_{\mathcal{{O}}_{i}%
}|\det K(t)|\vert {u}^{(i)}\vert ^{2}dydt \label{4c}%
\end{equation}

\begin{equation}
{J}\left({\f}, {g}\right)  =\frac{1}{2}\int_{0}^{T}\int_{\mathcal{{O}}}|\det K(t)|
(\left\vert {\f} \right\vert^{2}+\left\vert {g}\right\vert^{2})  dydt, \label{5}%
\end{equation}
where $\displaystyle \z_{i,d}$, $w_{i,d}$ are the images of $\widehat{\z}_{i,d}$
and $\widehat{w}_{i,d}$ by the diffeomorphism $\tau_t^{-1}$,
${\v}=({\v}^{(1)},{\v}^{(2)}) $, ${u}=({u}^{(1)},{u}^{(2)}) $, $\mathcal{{{O}}}_{1,d}$,
$\mathcal{{{O}}}_{2,d}$ are the images of $\mathcal{\widehat{{O}}}_{1,d}$,
$\mathcal{\widehat{{O}}}_{2,d}$ via $\tau_t^{-1}$  and
${\alpha_{i}},\widetilde{{\alpha}}_{i}, {\mu}_{i},\widetilde {{\mu}}_{i}>0$ are positive
constants.

\begin{remark}\label{bdf} From the regularity and uniqueness of the solution $\disp (\z, w)$ of (\ref{eq10}) the cost functionals ${J}_{i}$, $\widetilde{{J}_{i}}$ and ${J}$ are well defined.
\end{remark}

The Nash equilibrium for the costs ${J}_{i}$ and $\widetilde{{J}_{i}}$ $(i=1,2)$ is
the pair $({\v},{u})$, which depends on ${\f}$ and ${g}$, solution of
\begin{equation}\left\{
\begin{array}{l}
\disp {J}_{1}(  {\f},{g};{\v},{u})  =\text{~}\underset{\overline{{\v}}^{\left(  1\right)  }}%
{\min}\text{~}{J}_{1}(  {\f},{g};\overline{{\v}}^{\left(  1\right)  },{\v}^{\left(  2\right)
},{u}),\\[7pt]\disp  {J}_{2}\left(  {\f},{g};{v},{u}\right)  =\text{~}\underset{\overline{
{\v}}^{\left(  2\right)  }}{\min}\text{~}{J}_{2}(  {\f},{g};{v}^{\left(  1\right)
},\overline{{\v}}^{\left(  2\right)  },{u})  \label{en6}%
\end{array}\right.
\end{equation}
and%
\begin{equation}
\left\{
\begin{array}{l}
\disp\widetilde{{J}_{1}}\left(  {\f},{g};{\v},{u}\right)  =\text{~}\underset{\overline{{u}}^{\left(
1\right)  }}{\min}\text{~}{J}_{1}(  {\f},{g};{\v},\overline{{u}}^{\left(  1\right)
},{u}^{\left(  2\right)  }), \\[7pt]\disp\widetilde{{J}}_{2}\left(  {\f},{g};{\v},{u}\right)
=\text{~}\underset{\overline{{u}}^{\left(  2\right)  }}{\min}\text{~}{J}_{2}(
{\f},{g};{\v},{u}^{\left(  1\right)  },\overline{{u}}^{\left(  2\right)  }).
\label{en7}%
\end{array}\right.
\end{equation}

\begin{remark}\label{rfund} Since  the functionals ${J}_{i}$ and $\widetilde{{{J}_{i}}}$ $(i=1,2)$ are convex and lower semi-continuous, then $({\v},{u})$ is a Nash equilibrium for the cost functionals
${J}_{i}$ and $\widetilde{{{J}_{i}}}$ if, and only if, it verifies the Euler-Lagrange equation (see Section \ref{Sec2}), that is,
\begin{equation}
\displaystyle D_{\overline{{\v}}^{(i)}}  {J}_{i} (  {\f},{g};{\v},{u}) = 0,  \;\;\;\forall {\overline{\v}}^{(i)}  \in \mathbf{L}^{2}(  \mathcal{{O}}_{i}\times(0,T))
\label{en10}%
\end{equation}
and
\begin{equation}
\displaystyle D_{\overline{{\u}}^{(i)}} \widetilde{ {J}_{i}} (  {\f},{g};{\v},{u}) = 0, \;\;\;\forall {\overline{u}}^{(i)}  \in {L}^{2}(  \mathcal{{O}}_{i}\times(0,T)),
\label{en11}
\end{equation}
where $ D_{\overline{{\v}}^{(i)}}  {J}_{i}$ (respec. $\displaystyle
D_{\overline{{\u}}^{(i)}} \widetilde{ {J}_{i}})$ is a Gateaux derivative of ${J_{i}}$
(respec. $\widetilde{{J}_{i}})$ with respect to $\overline{{\v}}^{(i)}$ (respec.
$\overline{{u}}^{(i)}).$
\end{remark}

Note that if $({\v},{u})$ is the unique Nash equilibrium, depending on ${\f}$ and
${g}$, for (\ref{3c}) and (\ref{4c}), then its transformation, by  $\tau_t^{-1}$, is the
unique Nash equilibrium $(\widehat{\v},\widehat{u})$ for (\ref{3}) and (\ref{4}),
which depends on $\widehat{\f}$ and $\widehat{g}$.


We recall that our initial problem was the controls $\widehat{\f}$, $\widehat{g}$,
$\widehat{\v}^{(1)}$, $\widehat{\v}^{(2)}$, $\widehat{u}^{(1)}$, $\widehat{u}^{(2)}$
work so that the functions $(\widehat{\z},\widehat{w})$, unique solution of
(\ref{2}), reaches   a fixed state $\disp (\widehat{\z}^{T}, \widehat{w}^{T}) \in
\mathbf{L}^{2}(\Omega_t)\times L^2(\Omega_t)$, at time $T$, with cost
functionals defined by (\ref{3}) and (\ref{4}).

From the diffeomorphism $\tau_t$, this problem in $\disp \widehat{Q}$ was
transformed into one equivalent in the cylinder $\disp Q$. Thus, for fixed $\disp
({\z}^{T}, {w}^{T}) \in \mathbf{L}^{2}(\Omega)\times L^2(\Omega)$ the controls
${\f}$, ${g}$, $\v^{(1)}$, $\v^{(2)}$, $u^{(1)}$, $u^{(2)}$ must work so that the
unique solution $(\z, w)$ of (\ref{eq10}), evaluated at $t=T$, reaches the ideal
state $\disp ({\z}^{T}, {w}^{T})$. This will be done in the sense of an approximate
controllability. In fact, it is sufficient to prove that if $\v^{(1)}$, $\v^{(2)}$, $u^{(1)}$,
$u^{(2)}$, depending of $\f$ and $g$, is the unique Nash equilibrium for the cost
functionals (\ref{3c}) and (\ref{4c}), then we have an approximate controllability.
This means that if there exists the Nash equilibrium and $(\z,w)$ is the unique
solution of (\ref{eq10}), then the set generated by
$({\z}(\cdot,T,{\f},{g},{\v},{u}),{w}(\cdot,T,{\f},{g},{\v},{u}))$ is dense in $\disp
\mathbf{L}^2(\Omega) \times L^2(\Omega)$, that is, approximate $\disp ({\z}^{T},
{w}^{T})$. This will be proved in the Section \ref{Sec3}, that is, after  finding the
Nash equilibrium (in Section \ref{Sec2}) for each $({\f},{g})$, we will look for any
optimal control $(\overline{{\f}},\overline{{g}})$ such that
\begin{equation}
J(  \overline{{\f}},\overline{{g}})  =\text{~}\underset{(
{\f},{g})  }{\min}\text{~}J\left(  {\f},{g}\right),  \label{en8}%
\end{equation}
subject to the restriction of the approximate controllability type%
\begin{equation}
({\z}(\cdot,T,{\f},{g},{\v},{u}),{w}(\cdot,T,{\f},{g},{\v},{u}))\in B_{\mathbf{L}^{2}(\Omega)}({\z}^{T}%
,\epsilon)\times B_{L^{2}(\Omega)}({w}^{T},\epsilon),\label{en9}%
\end{equation}
where $B_{X}(C,\epsilon)$ denotes the ball in $X$ with center $C$ and radius
$\epsilon$.

\section{Nash equilibrium}\label{Sec2}

In this section, we determine the existence and uniqueness of a Nash
equilibrium in the sense of (\ref{en6}), (\ref{en7}) and a characterization of this
Nash equilibrium in terms of a solution of an adjoint system.

\subsection{Existence and uniqueness of Nash equilibrium}

For $i = {1,2}$, we  define the spaces
\[
\hnc_{i}=\mathbf{L}^{2}\left(  \mathcal{O}_{i}\times\left(
0,T\right)  \right),\quad\hnc=\hnc_{1} \times \hnc_{2}, \quad \mathcal{H}_{i}=L^{2}\left(  \mathcal{O}_{i}\times\left(
0,T\right)  \right)\quad\text{and}\quad\mathcal{H}=\mathcal{H}_{1} \times \mathcal{H}_{2},
\]
and  consider the operator
\begin{equation*}
\mathbb{L}_{i}\in\mathcal{L}\left(  \hnc_{i} \times \mathcal{H}_{i} ;\mathbf{L}^{2}\left(  Q\right) \times {L}^{2}\left(  Q\right)  \right) ; \;\; \mathbb{L}_{i}(\v^{(i)},u^{(i)}) = ( L_{1,i}\v^{(i)} + \widetilde{L}_{1,i}u^{(i)}, L_{2,i}\v^{(i)}
+ \widetilde{L}_{2,i}u^{(i)}),
\end{equation*}
where%
\begin{equation}
{L}_{1,i}\in\mathcal{L}\left(  \hnc_{i} ; \mathbf{L}^{2}\left(  Q\right)  \right), \;\; {L}_{2,i}\in\mathcal{L}\left(  \hnc_{i} ;L^{2}\left(  Q\right)  \right), \;\; \widetilde{L}_{1,i}\in\mathcal{L}\left( \mathcal{H}_{i} ;
\mathbf{L}^{2}\left(  Q\right),  \right)\;\;
\widetilde{L}_{2,i}\in\mathcal{L}\left(  \mathcal{H}_{i} ;L^{2}\left(  Q\right)  \right)\label{en44}
\end{equation}
and
\begin{equation}\label{en55}
( L_{1,i}\v^{(i)} + \widetilde{L}_{1,i}u^{(i)}, L_{2,i}\v^{(i)} + \widetilde{L}_{2,i}u^{(i)}) = ( \z^{(i)}, w^{(i)}),
\end{equation}
where $\left(  \z^{(i)}, w^{(i)}, p^{(i)}\right)  $ is a solution of following system:
\begin{equation}
\displaystyle\left|\begin{array}{l}
\displaystyle \z^{(i)}_t-yK'(t)K^{-1}(t)\nabla \z^{(i)}-\sum^{2}_{j,l,r=1}\beta_{lj}(t)
\beta_{rj}(t)\frac{\partial^2
\z^{(i)}}{{\partial y_l}{\partial y_r}}(y,t)+(\h(K^{-1}(t))^T)\cdot\nabla)\z^{(i)}
\\\displaystyle+(\z^{(i)}(K^{-1}(t))^T)\cdot\nabla)\h+\nabla p^{(i)}(y,t)K^{-1}(t)=\overline{K^{-1}(t)}\nabla\times w^{(i)}+\v^{(i)}\nchi_{\mathcal{O}_i\times[0,T]}\;\; \mbox{ in } Q
\\\displaystyle w^{(i)}_t-yK'(t)K^{-1}(t)\nabla w^{(i)}-\sum^{2}_{j,l,r=1}\beta_{lj}(t)
\beta_{rj}(t)\frac{\partial^2
w^{(i)}}{{\partial y_l}{\partial y_r}}(y,t)+\h(K^{-1}(t))^T\cdot\nabla w^{(i)}
\\+\z^{(i)}(K^{-1}(t))^T\cdot\nabla\te=\displaystyle\sum_{i=1}^2\beta_{ii}\nabla\times \z^{(i)}+\sum_{i,j=1}^2(-1)^{j+1}\beta_{ij}\frac{\partial \z^{(i)}_i}{\partial y_{3-j}}
+\displaystyle u^{(i)}\chi_{\mathcal{O}_i\times[0,T]}\;\; \mbox{ in } Q
\\\nabla\cdot(M^{-1}(\z^{(i)})^T)=0 \hspace*{8,7cm} \;\; \mbox{ in }Q
\\\z^{(i)}=0,\;\;w^{(i)}=0 \hspace*{9,1cm} \;\; \mbox{ on } \Sigma
\\\z^{(i)}(0)= 0,\;\;\;\;w^{(i)}(0)= 0 \hspace*{8,08cm} \;\; \mbox{ in }\Omega\label{6}%
\end{array}\right.\end{equation}

We can write the solution $(\z,w,p)$ of $( \ref{eq10})$ as
\begin{equation}
\begin{array}{l}
(\z,w,p)  =(L_{1,1}\v^{(1)}+L_{1,2}\v^{(2)}+  \widetilde{L}_{1,1}u^{(1)}+\widetilde{ L}_{1,2}u^{(2)} + \q,\\
\hspace{1,8cm} {L}_{2,1}\v^{(1)} +{L}_{2,2}\v^{(2)}+ \widetilde{L}_{2,1}u^{(1)}+ \widetilde {L}_{2,2}u^{(2)} + r,\\
\hspace{1,8cm} p^{(1)} + p^{(2)} + s)\label{7}%
 \end{array}
\end{equation}
where $(\q,r,s)$ solves
\begin{equation}
\displaystyle\left|\begin{array}{l}
\displaystyle \q_t-yK'(t)K^{-1}(t)\nabla \q-\sum^{2}_{j,l,r=1}\beta_{lj}(t)
\beta_{rj}(t)\frac{\partial^2
\q}{{\partial y_l}{\partial y_r}}(y,t)+(\h(K^{-1}(t))^T)\cdot\nabla)\q
\\\displaystyle+(\q(K^{-1}(t))^T)\cdot\nabla)\h+\nabla s(y,t)K^{-1}(t)=\overline{K^{-1}(t)}\nabla\times r+\f\nchi_{\mathcal{O}\times[0,T]} \hspace{1,1 cm}\;\; \mbox { in } Q
\\\displaystyle r_t-yK'(t)K^{-1}(t)\nabla r-\sum^{2}_{j,l,r=1}\beta_{lj}(t)
\beta_{rj}(t)\frac{\partial^2
r}{{\partial y_l}{\partial y_r}}(y,t)+\h(K^{-1}(t))^T\cdot\nabla r
\\+\q(K^{-1}(t))^T\cdot\nabla\te=\displaystyle\sum_{i=1}^2\beta_{ii}\nabla\times \q+\sum_{i,j=1}^2(-1)^{j+1}\beta_{ij}\frac{\partial \q_i}{\partial y_{3-j}}
+\displaystyle g\chi_{\mathcal{O}\times[0,T]} \hspace{0,68cm} \;\; \mbox{ in } Q
\\\nabla\cdot(M^{-1}\q^T)=0 \hspace{8,9cm} \;\; \mbox{ in }Q
\\\q=0,\;\;r=0 \hspace{9,5cm} \;\; \mbox{ on } \Sigma
\\\q(0)=z^{0},\;\;\;\;r(0)=w^{0} \hspace{8cm}\;\; \mbox{ in }\Omega\label{8}%
\end{array}\right.\end{equation}

We can rewrite the functionals defined in $(\ref{3})$ and $(\ref{4})$ as
\begin{align}
 \nonumber J_{i}(\f, g;\v,u) &= \displaystyle\frac{\alpha_{i}}{2}\int_{0}%
^{T}\int_{\mathcal{O}_{i,d}}|\det K(t)|\left\vert L_{1,1}\v^{(1)
}+L_{1,2}\v^{(2)} + \widetilde{L}_{1,1}u^{(1)}+\widetilde{ L}_{1,2}u^{(2)} -(\z_{i,d}-\q)  \right\vert
^{2}dydt\\[15pt] &+\displaystyle\frac{\mu_{i}}{2}\int_{0}^{T}\int_{\mathcal{O}
_{i}}|\det K(t)|\vert \v^{(i)}\vert ^{2}dydt,\quad
i=1,2 \label{9}%
\end{align}
and%
\begin{align}
 \nonumber \widetilde{J}_{i}(\f, g;\v,u)&=\displaystyle\frac{\widetilde{\alpha}_{i}}%
{2}\int_{0}^{T}\int_{\mathcal{O}_{i,d}}|\det K(t)|\left\vert
{L}_{2,1}\v^{(1)} +{L}_{2,2}\v^{(2)} +  \widetilde{L}_{2,1}u^{(1)}+\widetilde{L}_{2,2}u^{(2)}-(w_{i,d}-r)
\right\vert ^{2}dydt\\[15pt]&+\displaystyle\frac{\widetilde{\mu}_{i}}{2}\int_{0}^{T}\int
_{\mathcal{O}_{i}}|\det K(t)|\vert u^{(i)}\vert ^{2}dydt, \quad
i=1,2.\label{10}%
\end{align}
Thus, by (\ref{en10}) and (\ref{en11}), the vector $((\v^{(1) },\v^{(2)})
,(u^{(1)},u^{(2)})) \in \hnc\times\mathcal{H}$ is a Nash equilibrium if
\begin{equation}
\alpha_{i}(L_{1,1}\v^{(1)} +L_{1,2}\v^{(2)} + \widetilde{L}_{1,1}u^{(1)}+\widetilde{ L}_{1,2}u^{(2)}-( \z
_{i,d}-\q) ,L_{1,i}\hat{{\v}}^{(i)}) _{\mathbf{L}^{2}(
\mathcal{O}_{i,d}\times(0,T))}+\mu_{i}( \v^{(i) },\hat{{\v}}^{(
i)}) _{\hnc_{i}}=0\label{11}
\end{equation}
and%
\begin{equation}
\widetilde{\alpha}_{i}({L}_{2,1}\v^{(
1)}+{L}_{2,2}\v^{(2)} +  \widetilde{L}_{2,1}u^{(1)}+\widetilde{L}_{2,2}u^{(2)}- ( w_{i,d}-r)
,\widetilde{L}_{2,i}\hat{{u}}^{(i)})_{L^{2}(\mathcal{O}%
_{i,d}\times(0,T))}+\widetilde{\mu}_{i}( u^{(i)},\hat{{{u}}}^{(i)})
_{\mathcal{H}_{i}}=0, \label{12}%
\end{equation}
 for $ i = 1,2 $ and for all $((\hat{{{\v}}}^{(1) },\hat{{{\v}}}^{(2)}) ,(\hat{{{u}}}^{(1)},\hat{{{u}}}^{(2)})) \in
\hnc\times\mathcal{H}$. Consequently
\begin{equation}
\alpha_{i}(L_{1,i}^{\ast}[
L_{1,1}\v^{(1)}+L_{1,2}\v^{(2)} +  \widetilde{L}_{1,1}u^{(1)}+\widetilde{L}_{1,2}u^{(2)}  -(
\z_{i,d}-\q)] ,\hat{{\v}}^{( i)})_{\mathbf{L}^{2}(
\mathcal{O}_{i,d}\times(0,T))}+\mu_{i}( \v^{(i)},\hat{{\v}}^{(
i)})_{\hnc_{i}}=0 \label{13}%
\end{equation}
and%
\begin{equation}
\widetilde{\alpha}_{i}(\widetilde{L}_{2,i}^{\ast}[{L}_{2,1}\v^{(
1)}+{L}_{2,2}\v^{(2)} +    \widetilde
{L}_{2,1}u^{(1) }+\widetilde{L}_{2,2}u^{(2)}-(
w_{i,d}-r)] ,\hat{{u}}^{(i)}) _{L^{2}(\mathcal{O}_{i,d}\times(0,T))
}+\widetilde{\mu}_{i}(u^{(i)
},\hat{{u}}^{(i)})_{\mathcal{H}_{i}}=0, \label{14}%
\end{equation}
for $ i = 1,2 $ and for all $((\hat{{\v}}^{(1) },\hat{{\v}}^{(2)})
,(\hat{{u}}^{(1)},\hat{{u}}^{(2)})) \in \hnc\times\mathcal{H}$, where
$L_{1,i}^{\ast}\in\mathcal{L}\left(\mathbf{L}^{2}\left( Q\right) ; \hnc_{i} \right)$ and
$\widetilde{L}_{2,i}^{\ast}\in\mathcal{L}\left(L^{2}\left( Q\right) ; \mathcal{H}_{i}
\right)$ are, respectively, the adjoint operators of $L_{1,i}$ and
$\widetilde{L}_{2,i}$ defined in  $(\ref{en44})$ and $(\ref{en55})$.

From $(\ref{13})$ and $(\ref{14})$ we  deduce, for  $i = 1,2$, that
\[
\alpha_{i}L_{1,i}^{\ast}[  L_{1,1}\v^{(
1)}+L_{1,2}\v^{(2)} + \widetilde{L}_{1,1}u^{(1)}+\widetilde{L}_{1,2}u^{(2)}]  \chi_{\mathcal{O}_{i,d}}+\mu_{i}%
\v^{(i) }=\alpha_{i}L_{1,i}^{\ast}(\z_{i,d}-\q)
\chi_{\mathcal{O}_{i,d}}\quad\text{in}\quad \hnc_{i},
\]
and%
\[
\widetilde{\alpha}_{i}\widetilde{L}_{2,i}^{\ast}[{L}_{2,1}\v^{(
1)}+{L}_{2,2}\v^{(2)}+ \widetilde{L}_{2,1}u^{(1)}+\widetilde{L}_{2,2}u^{(2)}]\chi_{\mathcal{O}_{i,d}}%
+\widetilde{\mu}_{i}u^{(i)}=\widetilde{\alpha}_{i}\widetilde{L}%
_{2,i}^{\ast}(w_{i,d}-r)\chi_{\mathcal{O}_{i,d}}\quad\text{in}\quad \mathcal{H}_{i}.
\]

If we consider the operator $\mathbb{L}\in\mathcal{L}\left(  \hnc \times \mathcal{H};\hnc \times \mathcal{H}\right)  $ defined as%
\[%
\begin{array}
[c]{ll}%
\mathbb{L}((\v^{(1)},\v^{(2)}),(u^{(1},u^{(2)}))= & (\alpha_{1}L_{1,1}^{\ast
}[L_{1,1}\v^{(1)}+L_{1,2}\v^{(2)} +  \widetilde{L}_{1,1}u^{(1)}+\widetilde{L}_{1,2}u^{(2)}]\chi_{\mathcal{O}_{1,d}}+\mu_{1}\v^{(1)},\smallskip\\
& \alpha_{2}L_{1,2}^{\ast}[L_{1,1}\v^{(1)}+L_{1,2}\v^{(2)} +  \widetilde{L}_{1,1}u^{(1)}+\widetilde{L}_{1,2}u^{(2)}]\chi_{\mathcal{O}_{2,d}}%
+\mu_{2}\v^{(2)},\smallskip\\
& \widetilde{\alpha}_{1}\widetilde{L}_{2,1}^{\ast}[{L}_{2,1}\v^{(
1)}+{L}_{2,2}\v^{(2)} + \widetilde{L}_{2,1}u^{(1)}+\widetilde{L}_{2,2}u^{(2)}]\chi_{\mathcal{O}_{1,d}}%
+\widetilde{\mu}_{1}u^{(1)},\smallskip\\
& \widetilde{\alpha}_{2}\widetilde{L}_{2,2}^{\ast}[{L}_{2,1}\v^{(
1)}+{L}_{2,2}\v^{(2)} +  \widetilde{L}_{2,1}u^{(1)}+\widetilde{L}_{2,2}u^{(2)}]\chi_{\mathcal{O}_{2,d}}%
+\widetilde{\mu}_{2}u^{(2)}),
\end{array}
\]

then $\Xi=((\v^{(1)},\v^{(2)}),(u^{(1)},u^{(2)}))\in\hnc \times \mathcal{H}$ is a Nash equilibrium if%
\begin{equation}
\mathbb{L}\left(  \Xi\right)  =\Psi,\label{en1}%
\end{equation}
with%
\[%
\begin{array}
[c]{ll}%
\Psi=(\alpha_{1}L_{1,1}^{\ast}[(\z_{1,d}-\q)\chi_{\mathcal{O}_{1,d}%
},\alpha_{2}L_{1,2}^{\ast}[(\z_{2,d}-\q)\chi_{\mathcal{O}_{2,d}},\widetilde{\alpha
}_{i}\widetilde{L}_{2,1}^{\ast}[(w_{1,d}-r)\chi_{\mathcal{O}_{1,d}},\widetilde
{\alpha}_{2}\widetilde{L}_{2,2}^{\ast}[(w_{2,d}-r)\chi_{\mathcal{O}_{2,d}}).
\end{array}
\]

The following result  holds :

\begin{proposition} \label{pro2.1} Assume that
\begin{equation}
\begin{array}{l}
\alpha_{1}\Vert L_{1,2}\Vert^{2} + (\widetilde{\alpha}_{1} + \widetilde{\alpha}_{2})\Vert L_{2,2}\Vert^{2} <\frac {4}{3}\mu_{2},\quad\alpha_{2}\Vert L_{1,1}\Vert^{2} + (\widetilde{\alpha}_{1} + \widetilde{\alpha}_{2})\Vert L_{2,1}\Vert^{2}
<\frac {4}{3}\mu_{1}, \\[10pt]
 \widetilde{\alpha}_{1}\Vert \widetilde{L}_{2,2}\Vert^{2} + ({\alpha}_{1} + {\alpha}_{2}) \Vert\widetilde {L}_{1,2}\Vert^{2} <\frac {4}{3}\widetilde{\mu}_{2},\quad \widetilde{\alpha}_{2}\Vert \widetilde{L}_{2,1}\Vert^{2} + ({\alpha}_{1} +
{\alpha}_{2})\Vert \widetilde{L}_{1,1}\Vert^{2} <\frac {4}{3}\widetilde{\mu}_{1},
\label{en2}%
\end{array}
\end{equation}
where $\|\cdot\|$ denotes the norm of the corresponding linear operator. Then
$\mathbb{L}$ is an invertible operator. In particular, for each $(\f,g) \in
\textbf{L}^{2}\left(  \mathcal{O}\times\left(  0,T\right)  \right) \times L^{2}\left(
\mathcal{O}\times\left(  0,T\right)  \right)  $ there exists an unique Nash
equilibrium $((\v^{(1)}(\f,g),\v^{(2)}(\f,g)),(u^{(1)}(\f,g),u^{(2)}(\f,g)))$ in the sense of
(\ref{en6}) and (\ref{en7}).
\end{proposition}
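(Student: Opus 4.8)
The plan is to notice that, by the derivation preceding the statement, $\Xi=((\v^{(1)},\v^{(2)}),(u^{(1)},u^{(2)}))$ is a Nash equilibrium precisely when it solves the linear equation $\mathbb{L}(\Xi)=\Psi$ of (\ref{en1}) in the Hilbert space $\hnc\times\mathcal{H}$. Hence the whole statement reduces to proving that $\mathbb{L}$ is an isomorphism of $\hnc\times\mathcal{H}$, and I would obtain this from the Lax--Milgram theorem applied to the bilinear form $\mathfrak{a}(\Xi,\Theta)=(\mathbb{L}\Xi,\Theta)_{\hnc\times\mathcal{H}}$. Its continuity is immediate: each operator $L_{1,i},\widetilde{L}_{1,i},L_{2,i},\widetilde{L}_{2,i}$ is bounded by (\ref{en44}), so every component of $\mathbb{L}\Xi$ is controlled by a fixed multiple of $\|\Xi\|$. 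The entire weight of the argument therefore falls on the coercivity estimate.

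For coercivity I would test $\mathbb{L}$ against $\Xi$ itself. Setting $S=L_{1,1}\v^{(1)}+L_{1,2}\v^{(2)}+\widetilde{L}_{1,1}u^{(1)}+\widetilde{L}_{1,2}u^{(2)}$ and $T=L_{2,1}\v^{(1)}+L_{2,2}\v^{(2)}+\widetilde{L}_{2,1}u^{(1)}+\widetilde{L}_{2,2}u^{(2)}$, and passing the adjoints back across the inner products, one arrives at
\begin{align*}
(\mathbb{L}\Xi,\Xi)_{\hnc\times\mathcal{H}}
&=\mu_{1}\|\v^{(1)}\|^{2}+\mu_{2}\|\v^{(2)}\|^{2}+\widetilde{\mu}_{1}\|u^{(1)}\|^{2}+\widetilde{\mu}_{2}\|u^{(2)}\|^{2}\\
&\quad+\alpha_{1}(S\,\chi_{\mathcal{O}_{1,d}},L_{1,1}\v^{(1)})_{\mathbf{L}^{2}(Q)}+\alpha_{2}(S\,\chi_{\mathcal{O}_{2,d}},L_{1,2}\v^{(2)})_{\mathbf{L}^{2}(Q)}\\
&\quad+\widetilde{\alpha}_{1}(T\,\chi_{\mathcal{O}_{1,d}},\widetilde{L}_{2,1}u^{(1)})_{L^{2}(Q)}+\widetilde{\alpha}_{2}(T\,\chi_{\mathcal{O}_{2,d}},\widetilde{L}_{2,2}u^{(2)})_{L^{2}(Q)}.
\end{align*}
The four squared norms constitute the coercive core. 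I would keep the nonnegative diagonal contributions (those of the form $\alpha_{i}\|L_{1,i}\v^{(i)}\chi_{\mathcal{O}_{i,d}}\|^{2}$ and their $T$-analogues), use them to absorb the mixed products that occur within the same inner product, and estimate the remaining genuinely crossed products by Cauchy--Schwarz together with the operator-norm bounds $\|L_{1,2}\v^{(2)}\|\le\|L_{1,2}\|\,\|\v^{(2)}\|$, and so on.

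The decisive step is Young's inequality applied to each crossed product, with the weights distributed so that the coefficients accumulated in front of $\|\v^{(2)}\|^{2}$, $\|\v^{(1)}\|^{2}$, $\|u^{(2)}\|^{2}$ and $\|u^{(1)}\|^{2}$ are bounded respectively by the four left-hand sides of (\ref{en2}); the factor $\tfrac{4}{3}$ is exactly what this particular splitting produces. The grouping is dictated by the structure above: $\v^{(2)}$ enters only through $L_{1,2}\v^{(2)}$ (paired with weight $\alpha_{1}$) and through $L_{2,2}\v^{(2)}$ (paired with weights $\widetilde{\alpha}_{1}$ and $\widetilde{\alpha}_{2}$), which is why the first condition reads $\alpha_{1}\|L_{1,2}\|^{2}+(\widetilde{\alpha}_{1}+\widetilde{\alpha}_{2})\|L_{2,2}\|^{2}<\tfrac{4}{3}\mu_{2}$, and similarly for the other three. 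Under (\ref{en2}) the net coefficient of each squared norm stays strictly positive, giving $(\mathbb{L}\Xi,\Xi)\ge c\,\|\Xi\|^{2}$ for some $c>0$; by Lax--Milgram, $\mathbb{L}$ is then invertible.

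Finally, for each $(\f,g)\in\mathbf{L}^{2}(\mathcal{O}\times(0,T))\times L^{2}(\mathcal{O}\times(0,T))$ the pair $(\q,r)$ solving (\ref{8}) is uniquely determined by Remark \ref{rsol}, so $\Psi$ is a well-defined element of $\hnc\times\mathcal{H}$ depending continuously on $(\f,g)$; hence $\Xi=\mathbb{L}^{-1}\Psi$ is the unique solution of (\ref{en1}), that is, the unique Nash equilibrium in the sense of (\ref{en6})--(\ref{en7}), depending on $(\f,g)$. The main obstacle throughout is the constant bookkeeping in the coercivity step: because the operators coupling $\v$ and $u$ appear in several inner products simultaneously, one must choose the Young parameters so that all four inequalities in (\ref{en2}) close at once, and verifying that $\tfrac{4}{3}$ is indeed the sharp constant for the chosen splitting is the only delicate computation.
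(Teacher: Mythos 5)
Your proposal follows essentially the same route as the paper's own proof: expand $(\mathbb{L}\Xi,\Xi)_{\hnc\times\mathcal{H}}$, let the diagonal terms $\alpha_i\Vert L_{1,i}\v^{(i)}\Vert^2$, $\widetilde{\alpha}_i\Vert\widetilde{L}_{2,i}u^{(i)}\Vert^2$ absorb the same-inner-product mixed terms, estimate the remaining cross terms by Young's inequality with the $\tfrac13$--$\tfrac34$ splitting (which is exactly where the paper's constant $\tfrac43$ in (\ref{en2}) comes from), and conclude coercivity and invertibility via Lax--Milgram. Your identification of which operators and weights accumulate in front of each $\Vert\v^{(i)}\Vert^2$, $\Vert u^{(i)}\Vert^2$ matches the paper's explicit estimates (\ref{mat1})--(\ref{mat8}) and its formula for $\gamma$, so the argument is correct and essentially identical.
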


\begin{proof} We observe that
\[%
\begin{array}
[c]{l}%
(\mathbb{L}((\v^{(1)},\v^{(2)}),(u^{(1},u^{(2)}))),(\v^{(1)},\v^{(2)}%
),(u^{(1)},u^{(2)})))_{\hnc\times\mathcal{H}}\medskip\\
=\mu_{1}\Vert \v^{(1)}\Vert_{\hnc_{1}}^{2}+\alpha_{1}(\sum
\limits_{j=1}^{2}L_{1,j}\v^{(j)},L_{1,1}\v^{(1)})_{\textbf{L}^{2}(\mathcal{O}_{1,d}%
\times(0,T))} + \alpha_{1}(\sum
\limits_{j=1}^{2}\widetilde{L}_{1,j}u^{(j)},{L}_{1,1}\v^{(1)})_{\textbf{L}^{2}(\mathcal{O}_{1,d}%
\times(0,T))}\medskip\\ +\mu_{2}\Vert \v^{(2)}\Vert_{\hnc_{2}}^{2}
+\alpha_{2}(\sum\limits_{j=1}^{2}L_{1,j}\v^{(j)},L_{1,2}\v^{(2)})_{\textbf{L}^{2}%
(\mathcal{O}_{2,d}\times(0,T))} + \alpha_{2}(\sum
\limits_{j=1}^{2}\widetilde{L}_{1,j}u^{(j)},{L}_{1,2}\v^{(2)})_{\textbf{L}^{2}(\mathcal{O}_{2,d}%
\times(0,T))}\medskip\\ +\widetilde{\mu}_{1}\Vert u^{(1)}%
\Vert_{\mathcal{H}_{1}}^{2} + \widetilde{\alpha}_{1}(\sum
\limits_{j=1}^{2}L_{2,j}\v^{(j)},\widetilde{L}_{2,1}u^{(1)})_{L^{2}(\mathcal{O}_{1,d}%
\times(0,T))} +\widetilde{\alpha}_{1}(\sum
\limits_{j=1}^{2}\widetilde{L}_{2,j}u^{(j)},\widetilde{L}_{2,1}u^{(1)})_{L^{2}(\mathcal{O}_{1,d}%
\times(0,T))} \medskip\\+ \widetilde{\mu}_{2}\Vert u^{(2)}%
\Vert_{\mathcal{H}_{2}}^{2} + \widetilde{\alpha}_{2}(\sum
\limits_{j=1}^{2}L_{2,j}\v^{(j)},\widetilde{L}_{2,2}u^{(2)})_{L^{2}(\mathcal{O}_{2,d}%
\times(0,T))} +\widetilde{\alpha}_{2}(\sum
\limits_{j=1}^{2}\widetilde{L}_{2,j}u^{(j)},\widetilde{L}_{2,2}u^{(2)})_{L^{2}(\mathcal{O}_{2,d},%
\times(0,T))}, \medskip\\
\end{array}
\]
that is,
\begin{equation}\label{1000}
\begin{array}
[c]{l}%
(\mathbb{L}((\v^{(1)},\v^{(2)}),(u^{(1},u^{(2)}))),(\v^{(1)},\v^{(2)}%
),(u^{(1)},u^{(2)})))_{\hnc\times\mathcal{H}}\medskip\\
=\mu_{1}\Vert
\v^{(1)}\Vert_{\mathcal{H}_{1}}^{2}+\mu_{2}\Vert \v^{(2)}\Vert_{\mathcal{H}_{2}}^{2}+\alpha_{1}\Vert L_{1,1}\v^{(1)}\Vert_{\textbf{L}^{2}(\mathcal{O}_{1,d}\times
(0,T))}^{2} + \alpha_{1}(L_{1,2}\v^{(2)},L_{1,1}\v^{(1)})_{\textbf{L}^{2}(\mathcal{O}_{1,d}\times
(0,T))} \medskip\\
+ \alpha_{1}(\sum
\limits_{j=1}^{2}\widetilde{L}_{1,j}u^{(j)},{L}_{1,1}\v^{(1)})_{\textbf{L}^{2}(\mathcal{O}_{1,d}%
\times(0,T))}+ \alpha_{2}\Vert L_{1,2}\v^{(2)}\Vert_{\textbf{L}^{2}(\mathcal{O}_{2,d}%
\times(0,T))}^{2}\medskip\\
+\alpha_{2}(L_{1,1}\v^{(1)},L_{1,2}\v^{(2)})_{\textbf{L}^{2}(\mathcal{O}_{2,d}%
\times(0,T))}+    \alpha_{2}(\sum
\limits_{j=1}^{2}\widetilde{L}_{1,j}u^{(j)},{L}_{1,2}\v^{(2)})_{\textbf{L}^{2}(\mathcal{O}_{2,d}%
\times(0,T))}\medskip\\
+\widetilde{\mu}_{1}\Vert u^{(1)}\Vert_{\mathcal{H}_{1}}^{2}+  \widetilde{\mu}_{2}\Vert u^{(2)}\Vert_{\mathcal{H}_{2}}^{2} +\widetilde{\alpha}_{1}%
\Vert\widetilde{L}_{2,1}u^{(1)}\Vert_{L^{2}(\mathcal{O}_{1,d}\times(0,T))}%
^{2}+\widetilde{\alpha}_{1}(\widetilde{L}_{2,2}u^{(2)},\widetilde{L}_{2,1}%
u^{(1)})_{L^{2}(\mathcal{O}_{1,d}\times(0,T))} \medskip\\
+ \widetilde{\alpha}_{1}(\sum
\limits_{j=1}^{2}{L}_{2,j}\v^{(j)},\widetilde{L}_{2,1}u^{(1)})_{L^{2}(\mathcal{O}_{1,d}%
\times(0,T))} + \widetilde{\alpha}_{2}\Vert \widetilde{L}_{2,2}u^{(2)}\Vert_{L^{2}(\mathcal{O}_{2,d}%
\times(0,T))}^{2}\medskip\\
+\widetilde{\alpha}_{2}(\widetilde{L}%
_{2,1}u^{(1)},\widetilde{L}_{2,2}u^{(2)})_{L^{2}(\mathcal{O}_{2,d}\times(0,T))} + \widetilde{\alpha}_{2}(\sum
\limits_{j=1}^{2}\widetilde{L}_{2,j}\v^{(j)},\widetilde{L}_{2,2}u^{(2)})_{L^{2}(\mathcal{O}_{2,d},%
\times(0,T))}.
\end{array}
\end{equation}

 The Young's inequality implies that
\begin{equation}\label{mat1}
\begin{array}{rcl}
\disp \alpha_{1}(L_{1,2}\v^{(2)},L_{1,1}\v^{(1)})_{\textbf{L}^{2}(\mathcal{O}_{1,d}\times
(0,T)) }&\geq&    - \disp \frac{\alpha_{1}}{3}\Vert L_{1,1}\v^{(1)}\Vert_{\textbf{L}^{2}(\mathcal{O}_{1,d}%
\times(0,T))}^{2}\\ [10pt]
&-& \disp  \frac{3\alpha_{1}}{4}\Vert L_{1,2}\v^{(2)}\Vert_{\textbf{L}^{2}%
(\mathcal{O}_{1,d}\times(0,T))}^{2},
\end{array}
\end{equation}

\begin{equation}\label{mat2}
\begin{array}{rcl}
\disp \alpha_{1}(\sum
\limits_{j=1}^{2}\widetilde{L}_{1,j}u^{(j)},{L}_{1,1}\v^{(1)})_{\textbf{L}^{2}(\mathcal{O}_{1,d}%
\times(0,T))} &\geq&   - \disp \frac{2\alpha_{1}}{3}\Vert L_{1,1}\v^{(1)}\Vert_{\textbf{L}^{2}(\mathcal{O}_{1,d}%
\times(0,T))}^{2}\\ [10pt]
 &-&\disp \frac{3\alpha_{1}}{4}\Vert \widetilde{L}_{1,1}u^{(1)}\Vert_{\textbf{L}^{2}(\mathcal{O}_{1,d}%
\times(0,T))}^{2}\\[10pt]
 &-& \disp \frac{3\alpha_{1}}{4}\Vert \widetilde{L}_{1,2}u^{(2)}\Vert_{\textbf{L}^{2}(\mathcal{O}_{1,d}%
\times(0,T))}^{2},
\end{array}
\end{equation}

\begin{equation}\label{mat3}
\begin{array}{rcl}
\disp \alpha_{2}(L_{1,1}\v^{(1)},L_{1,2}\v^{(2)})_{L^{2}(\mathcal{O}_{1,d}\times
(0,T))} &\geq&  - \disp \frac{\alpha_{2}}{3}\Vert L_{1,2}\v^{(2)}\Vert_{\textbf{L}^{2}%
(\mathcal{O}_{2,d}\times(0,T))}^{2}\\  [10pt]
&-& \disp \frac{3\alpha_{2}}{4}\Vert L_{1,1}\v^{(1)}\Vert_{\textbf{L}^{2}%
(\mathcal{O}_{2,d}\times(0,T))}^{2},\\
\end{array}
\end{equation}

\begin{equation}\label{mat4}
\begin{array}{rcl}
\alpha_{2}(\sum
\limits_{j=1}^{2}\widetilde{L}_{1,j}u^{(j)},{L}_{1,2}\v^{(2)})_{\textbf{L}^{2}(\mathcal{O}_{2,d}%
\times(0,T))} &\geq&  -\disp \frac{2\alpha_{2}}{3}\Vert L_{1,2}\v^{(2)}\Vert_{\textbf{L}^{2}(\mathcal{O}_{2,d}%
\times(0,T))}^{2}\\ [10pt]
&-&\disp \frac{3\alpha_{2}}{4}\Vert \widetilde{L}_{1,1}u^{(1)}\Vert_{\textbf{L}^{2}(\mathcal{O}_{2,d}%
\times(0,T))}^{2}\\[10pt]
&-& \disp \frac{3\alpha_{2}}{4}\Vert \widetilde{L}_{1,2}u^{(2)}\Vert_{\textbf{L}^{2}(\mathcal{O}_{2,d}%
\times(0,T))}^{2},
\end{array}
\end{equation}

\begin{equation}\label{mat5}
\begin{array}{rcl}
\widetilde{\alpha}_{1}(\widetilde{L}_{2,2}u^{(2)},\widetilde{L}_{2,1}%
u^{(1)})_{L^{2}(\mathcal{O}_{1,d}\times(0,T))} &\geq&  - \disp \frac {\widetilde{\alpha}_{1}}{3}
\Vert\widetilde{L}_{2,1}u^{(1)}\Vert_{L^{2}(\mathcal{O}_{1,d}\times(0,T))}%
^{2}\\[10pt]
&-& \disp \frac{3\widetilde{\alpha}_{1}}{4}\Vert\widetilde{L}_{2,2}u^{(2)}\Vert
_{L^{2}(\mathcal{O}_{1,d}\times(0,T))}^{2},%
\end{array}
\end{equation}

\begin{equation}\label{mat6}
\begin{array}{rcl}
\widetilde{\alpha}_{1}(\sum
\limits_{j=1}^{2}L_{2,j}\v^{(j)},\widetilde{L}_{2,1}u^{(1)})_{L^{2}(\mathcal{O}_{1,d}%
\times(0,T))} &\geq&  - \disp \frac{2\widetilde{\alpha}_{1}}{3}\Vert \widetilde{L}_{2,1}u^{(1)}\Vert_{L^{2}(\mathcal{O}_{1,d}%
\times(0,T))}^{2}\\ [10pt]
&-&\disp \frac{3\widetilde{\alpha}_{1}}{4}\Vert {L}_{2,1}\v^{(1)}\Vert_{L^{2}(\mathcal{O}_{1,d}%
\times(0,T))}^{2}\\[10pt]
&-& \disp \frac{3\widetilde{\alpha}_{1}}{4}\Vert \widetilde{L}_{2,2}\v^{(2)}\Vert_{L^{2}(\mathcal{O}_{1,d}%
\times(0,T))}^{2},
\end{array}
\end{equation}

\begin{equation}\label{mat7}
\begin{array}{rcl}
\widetilde{\alpha}_{2}(\widetilde{L}_{2,1}u^{(1)},\widetilde{L}_{2,2}%
u^{(2)})_{L^{2}(\mathcal{O}_{2,d}\times(0,T))} &\geq& - \disp \frac {\widetilde{\alpha}_{2}}{3}
\Vert\widetilde{L}_{2,2}u^{(2)}\Vert_{L^{2}(\mathcal{O}_{2,d}\times(0,T))}%
^{2}\\[10pt]
&-& \disp \frac{3\widetilde{\alpha}_{2}}{4}\Vert\widetilde{L}_{2,1}u^{(1)}\Vert
_{L^{2}(\mathcal{O}_{2,d}\times(0,T))}^{2},%
\end{array}
\end{equation}

and
\begin{equation}\label{mat8}
\begin{array}{rcl}
\disp \widetilde{\alpha}_{2}(\sum
\limits_{j=1}^{2}L_{2,j}\v^{(j)},\widetilde{L}_{2,2}u^{(2)})_{L^{2}(\mathcal{O}_{2,d}%
\times(0,T))} &\geq&   - \disp \frac{2\widetilde{\alpha}_{2}}{3}\Vert \widetilde{L}_{2,2}u^{(2)}\Vert_{L^{2}(\mathcal{O}_{2,d}%
\times(0,T))}^{2}\\ [10pt]
&-&\disp \frac{3\widetilde{\alpha}_{2}}{4}\Vert {L}_{2,1}\v^{(1)}\Vert_{L^{2}(\mathcal{O}_{2,d}%
\times(0,T))}^{2}\\[10pt]
&-& \disp \frac{3\widetilde{\alpha}_{2}}{4}\Vert \widetilde{L}_{2,2}\v^{(2)}\Vert_{L^{2}(\mathcal{O}_{2,d}%
\times(0,T))}^{2}.
\end{array}
\end{equation}

Substituting  (\ref {mat1})--(\ref{mat8}) in (\ref{1000}), we get
\begin{equation}
\begin{array}
[c]{l}%
(\mathbb{L}((\v^{(1)},\v^{(2)}),(u^{(1)},u^{(2)})),((\v^{(1)},\v^{(2)}%
),(u^{(1)},u^{(2)})))_{\mathcal{H}\times\mathcal{H}}\medskip\\
\geq\gamma(\Vert
\v^{(1)}\Vert_{\mathcal{H}_{1}}^{2}+\Vert v^{(2)}\Vert_{\mathcal{H}_{2}}%
^{2}+\Vert u^{(1)}\Vert_{\mathcal{H}_{1}}^{2}+\Vert u^{(2)}\Vert
_{\mathcal{H}_{2}}^{2})\medskip\\
=\gamma\Vert((\v^{(1)},\v^{(2)}),(u^{(1)},u^{(2)}))\Vert_{\mathcal{H}%
\times\mathcal{H}}^{2},\label{en3}
\end{array}
\end{equation}
where $\gamma > 0$  (see (\ref {en2}))  is given by
\begin{equation*}
\begin{array}{rcl}
 \gamma = \min \{ \mu_1 &-&  \frac{3}{4} [ \alpha_{2}\|L_{1,1} \|^2  + (\widetilde{\alpha}_{1} + \widetilde{\alpha}_{2})  \|L_{2,1}\|^{2}],\; \mu_2 -
\frac{3}{4} [ \alpha_{1}\|L_{1,2} \|^2  + (\widetilde{\alpha}_{1} + \widetilde{\alpha}_{2})  \|L_{2,2}\|^{2}],  \\[10pt]
\widetilde{\mu}_1 &-& \frac{3}{4} [\widetilde{\alpha}_{2}\|\widetilde{L}_{2,1} \|^2  + ({\alpha}_{1} + {\alpha}_{2})  \|\widetilde{L}_{1,1}\|^{2}], \;\widetilde{\mu}_2 -  \frac{3}{4} [ \widetilde{\alpha}_{1}\|\widetilde{L}_{2,2} \|^2
+ ({\alpha}_{1} + {\alpha}_{2})  \|\widetilde{L}_{2,1}\|^{2}]\}.
\end{array}
\end{equation*}

Now, we define the functional $a: (\hnc\times \mathcal{H})\times (\hnc\times
\mathcal{H}) \rightarrow\mathbb{R}$ by
\[%
\begin{array}
[c]{l}%
a((\v^{(1)},\v^{(2)}),(u^{(1)},u^{(2)})),(\widetilde{\v}^{(1)},\widetilde{{\v}}%
^{(2)}),(\widetilde{u}^{(1)},\widetilde{u}^{(2)}))\medskip\\
=(\mathbb{L}((\v^{(1)},\v^{(2)}),(u^{(1)},u^{(2)})),((\widetilde{\v}^{(1)}%
,\widetilde{{\v}}^{(2)}),(\widetilde{u}^{(1)},\widetilde{u}^{(2)}))_{\hnc\times\mathcal{H}}.
\end{array}
\]

Obviously, from the definition of the operator $\mathbb{L}$ and the inequality
(\ref{en3}), $a(\cdot,\cdot)$ is a coercive continuous bilinear form. Consequently,
the Lax-Milgram Theorem implies that for all $\varphi \in (\hnc\times
\mathcal{H})^{'}$
\begin{eqnarray}
&& a(\widetilde{x}, \widetilde{y})= \langle\varphi, \widetilde{y}\rangle_{(\hnc\times \mathcal{H})^{'}\times (\hnc\times \mathcal{H})} , \,\, \forall \, \widetilde{y} \in \hnc\times \mathcal{H},
\end{eqnarray}
with $\widetilde{x} = (( \v^{(1)},\v^{(2)}), (u^{(1)}, u^{(2)}))$ and $\widetilde{y}= (( \widetilde{\v}^{(1)},\widetilde{\v}^{(2)}), (\widetilde{u}^{(1)}, \widetilde{u}^{(2)}))$.

On the other hand, since
$$
\langle \varphi, \widetilde{y}\rangle_{(\hnc\times \mathcal{H})^{'}\times (\hnc\times \mathcal{H})} = (\mathbb{L}\widetilde{x}, \widetilde{y})_{\hnc\times \mathcal{H}},
$$
the proof  is completed.
\end{proof}

\subsection{ Characterization of Nash equilibrium}

We observe that the characterization (\ref{en10}), (\ref{en11}) of the Nash
equilibrium can be written as follows: $(\v,u)=((\v^{(1)},\v^{(2)}),(u^{(1)},u^{(2)}))$ is
a Nash equilibrium if
\begin{equation}
\left\{
\begin{array}
[c]{l}%
\alpha_{i}(\z-\z_{i,\alpha},\be^{(i)})_{\textbf{L}^{2}(\mathcal{O}_{i,d}\times
(0,T))}+\mu_{i}(\v^{(i)},\hat{{\v}}^{(i)})_{\hnc_{i}}=0,\\
\widetilde{\alpha}_{i}(w-w_{i,\alpha},\gamma^{(i)})_{L^{2}(\mathcal{O}_{i,d
}\times(0,T))}+\widetilde{\mu}_{i}(u^{(i)},\hat{{u}}^{(i)})_{\mathcal{H}_{i}}=0,%
\end{array}
\right.  \label{e2.12}%
\end{equation}
for $ i = {1,2}$ and for all $(\hat{{\v}}^{(i)},\hat{{u}}^{(i)})\in \hnc_{i} \times
\mathcal{H}_{i}$, where $( \be^{(i)},\gamma^{(i)}, \bar{p}^{(i)})$  is the  solution of
the problem
\begin{equation}
\displaystyle\left|\begin{array}{l}
\displaystyle \be^{(i)}_t-yK'(t)K^{-1}(t)\nabla \be^{(i)}-\sum^{2}_{j,l,r=1}\beta_{lj}(t)
\beta_{rj}(t)\frac{\partial^2 \be^{(i)}}{{\partial y_l}{\partial
y_r}}(y,t)+(\overline{\h}(K^{-1}(t))^T)\cdot\nabla)\be^{(i)}
\\\displaystyle+(\be^{(i)}(K^{-1}(t))^T)\cdot\nabla)\overline{\h}+\nabla \overline{p}^{(i)}(y,t)K^{-1}(t)=\overline{K^{-1}(t)}\nabla\times \gamma^{(i)}+\hat{\v}^{(i)}\nchi_{\mathcal{O}_i\times[0,T]}
\hspace*{.5
cm}\mbox { in } Q,
\\\displaystyle \gamma^{(i)}_t-yK'(t)K^{-1}(t)\nabla \gamma^{(i)}-\sum^{2}_{j,l,r=1}\beta_{lj}(t)
\beta_{rj}(t)\frac{\partial^2
\gamma^{(i)}}{{\partial y_l}{\partial y_r}}(y,t)+\overline{\h}(K^{-1}(t))^T\cdot\nabla \gamma^{(i)}
\\+\be^{(i)}(K^{-1}(t))^T\cdot\nabla\overline{\te}\!=\!\displaystyle\sum_{i=1}^2\beta_{ii}(t)\nabla\times \be^{(i)}\!\!+\!\!\sum_{i,j=1}^2(-1)^{j+1}\beta_{ij}(t)\frac{\partial \be^{(i)}_i}{\partial y_{3-j}}
\!+\!\displaystyle \hat{u}^{(i)}\chi_{\mathcal{O}_i\times[0,T]}\mbox{ in } Q,
\\\nabla\cdot(M^{-1}(\be^{(i)})^T)=0\hspace*{9.1cm}\mbox{ in }Q,
\\\be^{(i)}=0,\;\;\gamma^{(i)}=0\hspace*{9,6cm}\mbox{ on } \Sigma,
\\\be^{(i)}(0)=0,\;\;\;\;\gamma^{(i)}(0)=0\hspace*{8,6cm} \mbox{ in }\Omega.\label{e2.13}%
\end{array}\right.\end{equation}
with $(\overline{\h},\overline{\theta})$ in the class (\ref{en14}).

To express (\ref {e2.12}) in a suitable form, we introduce the adjoint state
$(\q^{(i)}, r^{(i)}, {\pi}^{(i)})$ $(i=1,2)$ as a solution of the problem%
\begin{equation}
\displaystyle\left|\begin{array}{l}
\displaystyle -\q^{(i)}_t+yK'(t)K^{-1}(t)\nabla \q^{(i)}-\sum^{2}_{j,l,r=1}\beta_{lj}(t)
\beta_{rj}(t)\frac{\partial^2
\q^{(i)}}{{\partial y_l}{\partial y_r}}(y,t)
\displaystyle-(K^{-1}(t))^T)D\q^{(i)}\overline{\h}\\\displaystyle+\nabla {\pi}^{(i)}(y,t)K^{-1}(t)=\overline{K^{-1}(t)}\nabla\times r^{(i)}+\overline{\te}^{(i)}(K^{-1}(t))^T\cdot\nabla r+\alpha_{i}(\z-\z_{i,\alpha})\nchi_{\mathcal{O}_{i,d}}
\hspace*{0,2cm} \mbox { in } Q,
\\\displaystyle- r^{(i)}_t+yK'(t)K^{-1}(t)\nabla r^{(i)}-\sum^{2}_{j,l,r=1}\beta_{lj}(t)
\beta_{rj}(t)\frac{\partial^2
r^{(i)}}{{\partial y_l}{\partial y_r}}(y,t)-\overline{\h}(K^{-1}(t))^T\cdot\nabla r^{(i)}
\\=\displaystyle\sum_{i=1}^2\beta_{ii}(t)\nabla\times \q^{(i)}+\sum_{i,j=1}^2(-1)^{j+1}\beta_{ij}(t)\frac{\partial \q^{(i)}_i}{\partial y_{3-j}}
+\displaystyle \widetilde{\alpha}_{i}(w-w_{i,\alpha})\nchi_{\mathcal{O}_{i,d}}\hspace*{2cm}\mbox{ in } Q,
\\\nabla\cdot(M^{-1}(\q^{(i)})^T)=0\hspace*{9.1cm}\mbox{ in }Q,
\\\q^{(i)}=0,\;\;r^{(i)}=0\hspace*{9,7cm}\mbox{ on } \Sigma,
\\\q^{(i)}(T)=0,\;\;\;\;r^{(i)}(T)=0\hspace*{8,5cm} \mbox{ in }\Omega.\label{e2.14}%
\end{array}\right.\end{equation}

Here, $D \varphi$ stands for the symmetric gradient of $\varphi$:
$$
D \varphi = \nabla \varphi + \nabla \varphi^t
$$

Multiplying $(\ref{e2.14})_{1}$ by $\be^{(i)}$, $(\ref{e2.14})_{2}$ by $\gamma^{(i)}$,
integration by parts, leads to :
\begin{equation}
\displaystyle\left\{\begin{array}{l}
\!\!\!\!\!\left(\q^{(i)},\displaystyle \be^{(i)}_t-yK'(t)K^{-1}(t)\nabla \be^{(i)}-\sum^{2}_{j,l,r=1}\beta_{lj}(t)
\beta_{rj}(t)\frac{\partial^2
\be^{(i)}}{{\partial y_l}{\partial y_r}}(y,t)+(\overline{\h}(K^{-1}(t))^T)\cdot\nabla)\be^{(i)}\right.
\\\left.\displaystyle+(\be^{(i)}(K^{-1}(t))^T)\cdot\nabla\overline{\h}-\left(\overline{K^{-1}(t)}\nabla\times \gamma^{(i)}\right)\right)_{\L^2(Q)}=\displaystyle\alpha_{i}\left(\z-\z_{i,d},\be^{(i)}\right)_{\L^2(\mathcal{O}_{i,d}\times(0,T))}
\\\!\!\!\!\!\left(r^{(i)},\displaystyle \gamma^{(i)}_t-yK'(t)K^{-1}(t)\nabla \gamma^{(i)}-\sum^{2}_{j,l,r=1}\beta_{lj}(t)
\beta_{rj}(t)\frac{\partial^2
\gamma^{(i)}}{{\partial y_l}{\partial y_r}}(y,t)+\overline{\h}(K^{-1}(t))^T\cdot\nabla \gamma^{(i)}\right.
\\+\left.\be^{(i)}(K^{-1}(t))^T\cdot\nabla\overline{\te}\!-\!\displaystyle\sum_{i=1}^2\beta_{ii}(t)\nabla\times \be^{(i)}\!\!-\!\!\sum_{i,j=1}^2(-1)^{j+1}\beta_{ij}(t)\frac{\partial \be^{(i)}_i}{\partial y_{3-j}}
\!\right)_{L^2(Q)}\\= \displaystyle \widetilde{\alpha}_{i}\left( w-w_{i,d},\gamma^{(i)}\right)_{L^2({\mathcal{O}_{i,d}}\times(0,T))}
\label{e2.15}%
\end{array}\right.\end{equation}

In the other hand, multiplying $(\ref{e2.13})_{1}$ by $\q^{(i)}$ and $(\ref{e2.13})_{2}$ by $r^{(i)}$, it follows that
\begin{equation}
\displaystyle\left\{\begin{array}{l}
\!\!\!\!\!\left(\q^{(i)},\displaystyle \be^{(i)}_t-yK'(t)K^{-1}(t)\nabla \be^{(i)}-\sum^{2}_{j,l,r=1}\beta_{lj}(t)
\beta_{rj}(t)\frac{\partial^2
\be^{(i)}}{{\partial y_l}{\partial y_r}}(y,t)+(\overline{\h}(K^{-1}(t))^T)\cdot\nabla)\be^{(i)}\right.
\\\left.\displaystyle+(\be^{(i)}(K^{-1}(t))^T)\cdot\nabla\overline{\h}-\left(\overline{K^{-1}(t)}\nabla\times \gamma^{(i)}\right)\right)_{\L^2(Q)}=\displaystyle\left(\q^{(i)},\hat{v}^{(i)}\right)_{\hnc_{i}}
\\\!\!\!\!\!\left(r^{(i)},\displaystyle \gamma^{(i)}_t-yK'(t)K^{-1}(t)\nabla \gamma^{(i)}-\sum^{2}_{j,l,r=1}\beta_{lj}(t)
\beta_{rj}(t)\frac{\partial^2
\delta^{(i)}}{{\partial y_l}{\partial y_r}}(y,t)+\overline{\h}(K^{-1}(t))^T\cdot\nabla \gamma^{(i)}\right.
\\+\left.\be^{(i)}(K^{-1}(t))^T\cdot\nabla\overline{\te}\!-\!\displaystyle\sum_{i=1}^2\beta_{ii}(t)\nabla\times \be^{(i)}\!\!-\!\!\sum_{i,j=1}^2(-1)^{j+1}\beta_{ij}(t)\frac{\partial \be^{(i)}_i}{\partial y_{3-j}}
\!\right)_{L^2(Q)}\\=\displaystyle\left(r^{(i)},\hat{u}^{(i)}\right)_{\mathcal{H}_{i}}
\label{e2.16}%
\end{array}\right.\end{equation}

Therefore, by comparing (\ref{e2.15}) and (\ref{e2.16}), we get
\begin{equation}
\left\{
\begin{array}
[c]{l}%
\alpha_i(\z - \z_{i,\alpha}, \be^{(i)})_{\textbf{L}^2(\mathcal{O}_{i,d} \times (0,T))} = (\q^{(i)},\hat{\v}^{(i)})_{\hnc_{i}},\\
\widetilde{\alpha}_i (w - w_{i,\alpha}, \gamma^{(i)})_{L^2(\mathcal{O}_{i,d} \times (0,T))}= (r^{(i)}, \hat{u}^{(i)})_{\mathcal{H}_{i}},
\end{array}
\right.  \label{e2.17}%
\end{equation}
for $ i = {1,2}$ and for all $(\hat{\v}^{(i)},\hat{u}^{(i)})\in \hnc_{i} \times
\mathcal{H}_{i}.$

Combining  (\ref{e2.12}) and (\ref{e2.17}), we find
\begin{equation}
\v^{(i)}=-\dfrac{1}{\mu_{i}}\q^{(i)}\nchi_{\mathcal{O}_{i}}\quad
\text{and}\quad u^{(i)}=-\dfrac{1}{\widetilde{\mu}_{i}}r^{(i)}\chi_{\mathcal{O}_{i}},\quad
i=1,2.\label{e2.19}%
\end{equation}
Summarizing, \ \ \ given $(\f, g)  \ \ \ \in  \ \ \textbf{L}^{2}(\mathcal{O} \times(0,T))\
\ \ \times \ \ \ \ L^{2}(\mathcal{O}\times(0,T))$, the pair $(\v,u)=((\v^{(1)},\v^{(2)}),
(u^{(1)},u^{(2)}))$ is a Nash equilibrium in the sense of (\ref{en6}), (\ref{en7}), if
and only if (\ref{e2.19}) holds, where
 $(\z,w,p,\q^{(1)},r^{(1)},{\pi}^{(1)},\q^{(2)},r^{(2)},{\pi}^{(2)}
)$ is the solution of the coupled system
\begin{equation}
\displaystyle\left|\begin{array}{l}
\displaystyle \z_t-yK'(t)K^{-1}(t)\nabla \z-\sum^{2}_{j,l,r=1}\beta_{lj}(t)
\beta_{rj}(t)\frac{\partial^2 \z}{{\partial y_l}{\partial
y_r}}(y,t)+(\h(K^{-1}(t))^T)\cdot\nabla)\z
\\\displaystyle+(\z(K^{-1}(t))^T)\cdot\nabla)\h+\nabla p(y,t)K^{-1}(t)=\overline{K^{-1}(t)}\nabla\times w+\f\nchi_{\mathcal{O}}\\-\displaystyle\frac{1}{\mu_1}\q^{(1)}\nchi_{\mathcal{O}_1}-\frac{1}{\mu_2}\q^{(2)}\nchi_{\mathcal{O}_2}
\;\; \hspace*{8,1cm}\mbox { in }\;\;  Q,
\\\displaystyle w_t-yK'(t)K^{-1}(t)\nabla w-\sum^{2}_{j,l,r=1}\beta_{lj}(t)
\beta_{rj}(t)\frac{\partial^2
w}{{\partial y_l}{\partial y_r}}(y,t)+\h(K^{-1}(t))^T\cdot\nabla w
\\+\z(K^{-1}(t))^T\cdot\nabla\te=\displaystyle\sum_{i=1}^2\beta_{ii}\nabla\times \z+\sum_{i,j=1}^2(-1)^{j+1}\beta_{ij}\frac{\partial \z_i}{\partial y_{3-j}}
\\+g\chi_{\mathcal{O}}\displaystyle -\dfrac{1}{\widetilde{\mu}_{1}}r^{(1)}\chi_{\mathcal{O}_{1}}-\dfrac{1}{\widetilde{\mu}_{2}}r^{(2)}\chi_{\mathcal{O}_{2}} \;\; \hspace*{7,2cm}\mbox {  in }\;\; Q,
\\\displaystyle -\q^{(i)}_t+yK'(t)K^{-1}(t)\nabla \q^{(i)}-\sum^{2}_{j,l,r=1}\beta_{lj}(t)
\beta_{rj}(t)\frac{\partial^2
\q^{(i)}}{{\partial y_l}{\partial y_r}}(y,t)
\displaystyle-(K^{-1}(t))^T)D\q^{(i)}\overline{\h}\\\displaystyle+\nabla {\pi}^{(i)}(y,t)K^{-1}(t)=\overline{K^{-1}(t)}\nabla\times r^{(i)}+\overline{\te}^{(i)}(K^{-1}(t))^T\cdot\nabla r+\alpha_{i}(\z-\z_{i,d})\nchi_{\mathcal{O}_{i,d}}\;\;
\mbox { in }\;\; Q,
\\\displaystyle- r^{(i)}_t+yK'(t)K^{-1}(t)\nabla r^{(i)}-\sum^{2}_{j,l,r=1}\beta_{lj}(t)
\beta_{rj}(t)\frac{\partial^2
r^{(i)}}{{\partial y_l}{\partial y_r}}(y,t)-\overline{\h}(K^{-1}(t))^T\cdot\nabla r^{(i)}
\\=\displaystyle\sum_{i=1}^2\beta_{ii}(t)\nabla\times \q^{(i)}+\sum_{i,j=1}^2(-1)^{j+1}\beta_{ij}(t)\frac{\partial \q^{(i)}_i}{\partial y_{3-j}}
+\displaystyle \widetilde{\alpha}_{i}(w-w_{i,d})\nchi_{\mathcal{O}_{i,d}} \;\; \hspace*{1,9cm}\mbox{  in } \;\; Q,
\\\nabla\cdot(M^{-1}\z^T)=0,\;\;\nabla\cdot(M^{-1}(\q^{(i)})^T)=0 \;\; \hspace*{5,9cm}\mbox{ in }\;\;Q,
\\\z=\q^{(i)}=0,\;\;w=r^{(i)}=0 \;\; \hspace*{8,1cm}\mbox{ on }\;\; \Sigma,
\\\z(0)=0,\;\;\;w(0)=0,\;\;\;\q^{(i)}(T)=0,\;\;r^{(i)}(T)=0 \;\; \hspace*{4,9cm}\mbox{ in }\;\;\Omega.\label{e2.20}
\end{array}\right.
\end{equation}

\section{On the approximate controllability}\label{Sec3}

Since we have proved the existence, uniqueness and characterization of
followers $\v^{(i)}$ and $u^{(i)}$ $(i=1,2)$, the leaders $\f$ and $g$ require now
that the  solution $(\z,w)$ of $(\ref{eq10})$, evaluated at time $t=T$, be as close
as possible to $(\z^T,w^T)$. This will be possible if the system $(\ref{eq10})$ is
approximate controllable.

The following approximate controllability result holds.

\begin{theorem}
\label{theorem 3.2} Let us consider $(\f,g)\in \textbf{L}^{2}(\mathcal{O}\times(0,T))\times L^{2}(\mathcal{O}\times(0,T))$
and $(\v(\f,g),u(\f,g))$ a Nash equilibrium in the sense (\ref{en6}), (\ref{en7}). Then the functions
\[
(\z(T),w(T))=(\z(\cdot,T,\f,g,\v,u),w(\cdot,T,\f,g,\v,u)),
\]
where $(\z,w)$ solves the system $(\ref{eq10})$, generate a dense subset of $\H \times L^{2}(\Omega)$.
\end{theorem}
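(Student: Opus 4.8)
The plan is to establish the density by the Hahn--Banach theorem, reducing the assertion to a unique continuation property for an adjoint system. Since the state $(\z,w)$ solving the coupled optimality system $(\ref{e2.20})$ depends affinely on the leader pair $(\f,g)$ (the sources $\z_{i,d},w_{i,d}$ and the data at $t=0,T$ contributing a fixed offset), the reachable set $\{(\z(T),w(T))\}$ is an affine subspace of $\H\times L^2(\Omega)$; it is therefore dense if and only if its underlying linear part has dense range. Thus it suffices to take any $(\ph^T,\sigma^T)\in\H\times L^2(\Omega)$ for which the functional $(\z(T),\ph^T)_{\H}+(w(T),\sigma^T)_{L^2(\Omega)}$ is independent of $(\f,g)$, and to prove that necessarily $(\ph^T,\sigma^T)=0$.

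First I would introduce the adjoint system associated with $(\ref{e2.20})$. Because $(\ref{e2.20})$ is a forward--backward coupled system --- the velocities $(\z,w)$ being prescribed at $t=0$ and the Nash adjoints $(\q^{(i)},r^{(i)})$ at $t=T$ --- its adjoint is again a coupled system: the component $(\ph,\sigma)$ dual to $(\z,w)$ runs backward in time carrying the terminal data $(\ph(T),\sigma(T))=(\ph^T,\sigma^T)$, while the components dual to the Nash adjoints $(\q^{(i)},r^{(i)})$ run forward from zero initial data. The divergence-free constraint is enforced through an adjoint pressure, so that $(\ph,\sigma)$ stays in the solenoidal class. Because the transformed elliptic part $\sum_{j,l,r}\beta_{lj}\beta_{rj}\,\partial^2/\partial y_l\partial y_r$ is formally self-adjoint, only the first-order transport terms $yK'(t)K^{-1}(t)\nabla(\cdot)$ and the curl couplings change sign or transpose under the integration by parts.

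Next I would multiply the equations of $(\ref{e2.20})$ by the corresponding adjoint variables, multiply the adjoint equations by the state variables, integrate over $Q$ and integrate by parts, using the homogeneous boundary conditions and the matching of the initial/terminal data. All interior coupling terms cancel in pairs, and the only surviving contribution at the time endpoints is $(\z(T),\ph^T)_{\H}+(w(T),\sigma^T)_{L^2(\Omega)}$. On the control side the leader terms $\f\nchi_{\mathcal{O}}$ and $g\chi_{\mathcal{O}}$ produce the duality identity
\begin{equation*}
(\z(T),\ph^T)_{\H}+(w(T),\sigma^T)_{L^2(\Omega)}
=\text{const}+\int_0^T\!\!\int_{\mathcal{O}}\big(\f\cdot\ph+g\,\sigma\big)\,dy\,dt,
\end{equation*}
the constant collecting the contributions of the fixed sources $\z_{i,d},w_{i,d}$ and of the data at $t=0,T$. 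Requiring the left-hand side to be independent of $(\f,g)$ forces $\int_0^T\!\int_{\mathcal{O}}(\f\cdot\ph+g\,\sigma)=0$ for all admissible leaders, hence $\ph=0$ and $\sigma=0$ on $\mathcal{O}\times(0,T)$.

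The hard part will be the final step: concluding from $\ph\equiv 0$, $\sigma\equiv 0$ on the open cylinder $\mathcal{O}\times(0,T)$ that the terminal data $(\ph^T,\sigma^T)$ vanish. This requires a unique continuation property for the coupled Stokes--micropolar adjoint system with the time-dependent coefficients $\beta_{lj}(t)$ and $K'(t)K^{-1}(t)$ generated by the diffeomorphism $\tau_t$. Following the treatment of linearized micropolar fluids in Araruna et al.~\cite{A1} --- itself resting on Fabre-type unique continuation for the Stokes system together with the Carleman estimates of Fern\'andez-Cara and Guerrero \cite{FCG} --- one shows that a solution of this adjoint system vanishing on $\mathcal{O}\times(0,T)$ must vanish identically in $Q$; in particular $(\ph^T,\sigma^T)=(\ph(T),\sigma(T))=0$. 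This yields the density of $\{(\z(T),w(T))\}$, and hence the approximate controllability, in $\H\times L^2(\Omega)$.
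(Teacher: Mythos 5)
Your overall architecture coincides with the paper's: an orthogonality (Hahn--Banach) reduction, the introduction of the coupled forward--backward adjoint of the optimality system (\ref{e2.20}) --- which is exactly the paper's system (\ref{e3.3}), with $(\vph,\psi)$ running backward from the data $(\kxi,\eta)$ and the follower-adjoint components $(\be^{(i)},\gamma^{(i)})$ running forward from zero --- followed by the duality identity $(\z(T),\kxi)_{\textbf{L}^{2}(\Omega)}+(w(T),\eta)_{L^{2}(\Omega)}=(\vph,\f\chi_{\mathcal{O}})_{\textbf{L}^{2}(Q)}+(\psi,g\chi_{\mathcal{O}})_{L^{2}(Q)}$, and the conclusion $(\vph,\psi)\equiv 0$ in $\mathcal{O}\times(0,T)$. (The paper first normalizes $\z_{i,d}=w_{i,d}=0$ by linearity, so no additive constant appears; your ``const'' plays the same role.)

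Your final step, however, contains a genuine gap. You invoke ``a unique continuation property for the coupled Stokes--micropolar adjoint system,'' i.e.\ for the full forward--backward system, and present it as something quotable from \cite{A1} and \cite{FCG}. No such result for the coupled system is available to cite: the Carleman estimate of \cite{FCG} (Proposition 1) concerns the linearized micropolar system alone, and unique continuation for forward--backward cascade systems is precisely the kind of statement that needs its own proof. What actually closes the argument --- and what your write-up omits, never using the hypothesis $\mathcal{O}_{1},\mathcal{O}_{2}\subset\mathcal{O}$ --- is a decoupling observation: once $(\vph,\psi)\equiv 0$ in $\mathcal{O}\times(0,T)$, the sources $-\frac{1}{\mu_{i}}\vph\chi_{\mathcal{O}_{i}}$ and $-\frac{1}{\widetilde{\mu}_{i}}\psi\chi_{\mathcal{O}_{i}}$ of the forward components vanish, and since $(\be^{(i)}(0),\gamma^{(i)}(0))=(0,0)$, uniqueness for the forward problem gives $(\be^{(i)},\gamma^{(i)})\equiv 0$ in $Q$. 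Substituting back, $(\vph,\psi)$ then solves the homogeneous, \emph{uncoupled} adjoint system (\ref{e3.25}) and vanishes on $\mathcal{O}\times(0,T)$; only at this point does the Carleman estimate of \cite{FCG} apply, giving $(\vph,\psi)\equiv 0$ in $Q$ and hence $(\kxi,\eta)=0$. With this decoupling step inserted, your proof coincides with the paper's; without it, the appeal to unique continuation for the coupled system is unsupported.
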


\begin{proof}
By linearity of the optimality system (\ref{e2.20}), we do not restrict the problem
by assuming that
\[
\z_{i,d}=0\quad\text{and}\quad w_{i,d}=0,\quad i=1,2.
\]

To prove the density claimed, we consider $\left(  \kxi,\eta\right)  \in \H\times
L^{2}\left(  \Omega\right)$ such that
\begin{equation}
(\z(T),\kxi)_{\mathbf{L}^{2}(\Omega)}+(w(T),\eta)_{L^{2}(\Omega
)}=0,\quad\forall\,(\f,g)\in \textbf{L}^{2}(\mathcal{O}\times(0,T)) \times L^{2}(\mathcal{O}\times(0,T)).\label{e3.19}%
\end{equation}
We want to show that $\left(  \kxi,\eta\right)  =0.$ For this, let us consider the
coupled system:
\begin{equation}
\left\{
\begin{array}
[c]{lll}%
\displaystyle -\vph_t+yK'(t)K^{-1}(t)\nabla \vph-\sum^{2}_{j,l,r=1}\beta_{lj}(t)
\beta_{rj}(t)\frac{\partial^2
\vph}{{\partial y_l}{\partial y_r}}(y,t) - (K^{-1}(t))^T (D\vph)\h
\\\displaystyle+\nabla \pi K^{-1}(t)=\overline{K^{-1}(t)}\nabla\times \psi  +  \overline{K^{-1}(t)}\te\nabla\psi  +\alpha_{1}\be^{(1)}\chi_{\mathcal{O}_{1,d}}+\alpha
_{2}\be^{(2)}\chi_{\mathcal{O}_{2,d}} & \mathrm{in} & Q,\\
\displaystyle -\psi_t+yK'(t)K^{-1}(t)\nabla \psi-\sum^{2}_{j,l,r=1}\beta_{lj}(t)
\beta_{rj}(t)\frac{\partial^2
\psi}{{\partial y_l}{\partial y_r}}(y,t)-\h(K^{-1}(t))^T\cdot\nabla \psi
\\=\displaystyle\sum_{i=1}^2\beta_{ii}\nabla\times \vph-\sum_{i,j=1}^2(-1)^{j+1}\beta_{ij}\frac{\partial \vph_i}{\partial y_{3-j}} +\widetilde
{\alpha}_{1}\gamma^{(1)}\chi_{\mathcal{O}_{1,d}}+\widetilde{\alpha}_{2}%
\gamma^{(2)}\chi_{\mathcal{O}_{2,d}} & \mathrm{in} & Q,\\
\displaystyle \be_t^{(i)}-yK'(t)K^{-1}(t)\nabla \be^{(i)}-\sum^{2}_{j,l,r=1}\beta_{lj}(t)
\beta_{rj}(t)\frac{\partial^2
\be^{(i)}}{{\partial y_l}{\partial y_r}}(y,t)
+\displaystyle (\overline{h}(K^{-1}(t)^T.\nabla))\be^{(i)} \\
+ (\be^{(i)}(K^{-1}(t)^T)\cdot \nabla)\overline{h} + \nabla\overline{p}^{(i)}(y,t)K^{-1}(t) - \overline{K^{-1}(t)}\nabla\times \gamma^{(i)} = -\dfrac{1}{\mu}_{i}\vph\chi_{\mathcal{O}_{i}} & \mathrm{in} & Q,\\
\displaystyle \gamma_t^{(i)}-yK'(t)K^{-1}(t)\nabla \gamma^{(i)}-\sum^{2}_{j,l,r=1}\beta_{lj}(t)
\beta_{rj}(t)\frac{\partial^2
\gamma^{(i)}}{{\partial y_l}{\partial y_r}}(y,t)+\overline{\h}(K^{-1}(t))^T\cdot\nabla \gamma^{(i)}\\
\displaystyle +\be^{(i)}(K^{-1}(t))^T\cdot\nabla\overline{\te}- \sum_{i=1}^2\beta_{ii}(t)\nabla\times \be^{(i)}-\sum_{i,j=1}^2(-1)^{j+1}\beta_{ij}(t)\frac{\partial \be^{(i)}_i}{\partial y_{3-j}}= -\frac{1}{\tilde{\mu_i}}\psi\chi_{\mathcal{O}_i}& \mathrm{in} & Q,\\
\nabla\cdot (M^{-1}\vph^{T})=0,\quad\nabla\cdot (M^{-1}(\be^{(i)})^T)=0 & \mathrm{in} & Q,\\
\vph=\be^{(i)}=0,\quad \psi=\gamma^{(i)}=0 & \mathrm{on} & \Sigma,\\
\vph(T)=\kxi,\quad \psi(T)=\eta,\quad\be^{(i)}(0)=0,\quad\gamma^{(i)}(0)=0 &
\mathrm{in} & \Omega.
\end{array}
\right.  \label{e3.3}%
\end{equation}
\\
Multiplying $(\ref{e3.3})_1$ by $\z$, $(\ref{e3.3})_2$ by $w$,  integrating in $Q$,
making integrations by parts and using  (\ref{e2.20}), it follows that
\begin{equation}\label{mat10}
\begin{array}{rcl}
(\vph, \overline{K^{-1}(t)}\nabla\times w+\f\nchi_{\mathcal{O}} -\displaystyle\frac{1}{\mu_1}\q^{(1)}\nchi_{\mathcal{O}_1}-\frac{1}{\mu_2}\q^{(2)}\nchi_{\mathcal{O}_2})_{\textbf{L}^{2}(Q)} - (\kxi, \z(T))_{\mathbf{L}^{2}(\Omega)} \\[10pt]
\hspace{3 cm} =   ( \overline{K^{-1}(t)}\nabla\times \psi  +  \overline{K^{-1}(t)}\te\nabla\psi  +\alpha_{1}\be^{(1)}\chi_{\mathcal{O}_{1,d}}+\alpha
_{2}\be^{(2)}\chi_{\mathcal{O}_{2,d}},\z)_{\textbf{L}^{2}(Q)}
\end{array}
\end{equation}
and
\begin{align}\label{mat11}
\nonumber &(\psi, - \z(K^{-1}(t))^T\cdot\nabla\te +  \displaystyle\sum_{i=1}^2\beta_{ii}\nabla\times \z+\sum_{i,j=1}^2(-1)^{j+1}\beta_{ij}\frac{\partial \z_i}{\partial y_{3-j}}
+g\chi_{\mathcal{O}}\displaystyle -\dfrac{1}{\widetilde{\mu}_{1}}r^{(1)}\chi_{\mathcal{O}_{1}}
-\dfrac{1}{\widetilde{\mu}_{2}}r^{(2)}\chi_{\mathcal{O}_{2}})_{L^{2}(Q)}\\[10pt]
&-  (\eta, w(T))_{L^{2}(\Omega)}=  ( \displaystyle\sum_{i=1}^2\beta_{ii}\nabla\times \vph-\sum_{i,j=1}^2(-1)^{j+1}\beta_{ij}\frac{\partial \vph_i}{\partial y_{3-j}} +\widetilde
{\alpha}_{1}\gamma^{(1)}\chi_{\mathcal{O}_{1,d}} +\widetilde{\alpha}_{2}%
\gamma^{(2)}\chi_{\mathcal{O}_{2,d}}, w )_{L^{2}(Q)}
\end{align}

Adding both members of (\ref{mat10}) and (\ref{mat11}), we deduce that
\begin{equation}\label{e3.20}
\begin{array}{rcl}
(\vph,\f\chi_{\mathcal{O}}-\dfrac{1}{\mu_{1}}{\q}^{(1)}\chi_{\mathcal{O}_{1}%
}-\dfrac{1}{\mu_{2}}{\q}^{(2)}\chi_{\mathcal{O}_{2}})_{\textbf{L}^{2}(Q)}%
 + (\psi,g\chi_{\mathcal{O}}-\dfrac{1}{\widetilde{\mu}_{1}}r^{(1)}\chi_{\mathcal{O}_{1}}%
-\dfrac{1}{\widetilde{\mu}_{2}}r^{(2)}\chi_{\mathcal{O}_{2}})_{L^{2}(Q)}\\[10pt]
\disp - (\kxi,{\z}(T))_{\textbf{L}^{2}(\Omega)}- (\eta,w(T))_{L^{2}(\Omega)} =(\alpha_{1}\be^{(1)}\chi_{\mathcal{O}_{1,d}}+\alpha_{2}\be^{(2)}\chi_{\mathcal{O}_{2,d}},\z)_{\textbf{L}^{2}(Q)}\\
\disp +  (\widetilde{\alpha}_{1}\gamma^{(1)}\chi_{\mathcal{O}_{1,d}}+
\widetilde{\alpha}_{2}\gamma^{(2)}\chi_{\mathcal{O}_{2,d}},w)_{L^{2}(Q)}%
\end{array}
\end{equation}

Analogously, multiplying  $(\ref{e3.3})_3$ by $\q$ , $(\ref{e3.3})_4$ by $r$,
integrating in $Q$, making integrations by parts, using  (\ref{e2.20})  (with $\left(
z_{i,d},w_{i,d}\right) =0,~i=1,2$) and summing the resulting equations, we obtain
\begin{equation}\label{e3.21}%
\begin{array}{l}
-(\dfrac{1}{\mu_{i}}\vph\chi_{\mathcal{O}_{i}},\q^{(i)})_{\textbf{L}^{2}(Q)}-   (\dfrac{1}{\widetilde{\mu}_{i}}\psi\chi_{\mathcal{O}_{i}},r^{(i)})_{L^{2}(Q)} =\alpha
_{i}(\be^{(i)},\z)_{\textbf{L}^{2}(\mathcal{O}_{i,d}\times(0,T))}\\[10pt]
 \hspace {7 cm} + \widetilde{\alpha}_{i}(\gamma^{(i)},w)_{L^{2}(\mathcal{O}_{i,d
}\times(0,T)},\quad i=1,2.
\end{array}
\end{equation}
Substituting (\ref{e3.21}) in (\ref{e3.20}) it follows that
\begin{equation} \label{e3.23}%
(\z(T),\kxi)_{\textbf{L}^{2}(\Omega)} + (w(T),\eta)_{L^{2}(\Omega)} = (\vph,\f\chi_{\mathcal{O}})_{\textbf{L}^{2}(Q)} + (\psi,g\chi_{\mathcal{O}}%
)_{L^{2}(Q)},
\end{equation}
for all $(\f, g)  \in  \textbf{L}^{2}(\mathcal{O} \times(0,T)) \times
L^{2}(\mathcal{O}\times(0,T)).$

From the above identity and the condition (\ref{e3.19}), we have
\begin{equation}
(\vph,\psi)\equiv0\quad\mathrm{in\quad\mathcal{O}\times(0,T)}.\label{e3.24}%
\end{equation}

It follows from  (\ref{e3.24}), together with the hypothesis
$\mathcal{O}_{i}\subset\mathcal{O}$ ($i=1,2$), that the right hand-side of the
system for $(\be^{(i)},\gamma^{(i)},\bar{p}^{(i)})$ in (\ref{e3.3}) vanish. Therefore
\begin{equation}
(\be^{(i)},\gamma^{(i)})\equiv 0\quad\mathrm{in}\quad Q.\label{en13}
\end{equation}
because $(\be^{(i)}(0),\gamma^{(i)}(0))=0$. Substituting (\ref{en13}) in (\ref{e3.3}),
we obtain that $(\vph,\psi)$ is solution of the following system:
\begin{equation}
\left\{
\begin{array}
[c]{lll}%
\displaystyle -\vph_t+yK'(t)K^{-1}(t)\nabla \vph-\sum^{2}_{j,l,r=1}\beta_{lj}(t)
\beta_{rj}(t)\frac{\partial^2
\vph}{{\partial y_l}{\partial y_r}}(y,t) -(K^{-1}(t))^T (D\vph)\h
\\\displaystyle+\nabla \pi K^{-1}(t)=\overline{K^{-1}(t)}\nabla\times \psi  +  \overline{K^{-1}(t)}\te\nabla\psi &\mathrm{in} & Q,\\
\displaystyle -\psi_t+yK'(t)K^{-1}(t)\nabla \psi-\sum^{2}_{j,l,r=1}\beta_{lj}(t)
\beta_{rj}(t)\frac{\partial^2
\psi}{{\partial y_l}{\partial y_r}}(y,t)-\h(K^{-1}(t))^T\cdot\nabla \psi
\\=\displaystyle\sum_{i=1}^2\beta_{ii}\nabla\times \vph-\sum_{i,j=1}^2(-1)^{j+1}\beta_{ij}\frac{\partial \vph_i}{\partial y_{3-j}}
&\mathrm{in} & Q,\\
\nabla\cdot (M^{-1}\vph^{T})=0  & \mathrm{in} & Q,\\
\vph=0,\quad \psi=0  &\mathrm{in}& \mathcal{O} \times (0,T),\\
\vph=0,\quad \psi=0 &\mathrm{on}& \Sigma,\\
\vph(T)=\kxi,\quad \psi(T)=\eta &\mathrm{in} & \Omega.
\end{array}
\right. \label{e3.25}%
\end{equation}

From (\ref{e3.24}), (\ref{e3.25}) and the Calerman estimate proved by
Fernandez-Cara and Guerrero (cf.  \cite{FCG}, Proposition 1), we obtain
\begin{equation} \label{eq60}
(\vph,\psi)\equiv0\quad\mathrm{in}\quad Q.
\end{equation}

In fact, the coefficient of the principal part
$$\disp - \sum^{2}_{j,l,r=1}\beta_{lj}(t)\beta_{rj}(t)\frac{\partial^2 \vph}{{\partial y_l}{\partial y_r}}(y,t),\;\;  -\sum^{2}_{j,l,r=1}\beta_{lj}(t)\beta_{rj}(t)\frac{\partial^2\psi}{{\partial y_l}{\partial y_r}}(y,t)$$
according to the assumption on $\disp K(t)$ are of class $C^2$.

In particular $(\kxi,\eta)\equiv0$. This complete the proof.
\end{proof}

\section{Optimality system for the leader}\label{Sec4}

Thanks to the results obtained in Section \ref{Sec2}, we can take, for  each pair
$(\f,g)$, the Nash equilibrium $(\v,w)$ associated to solution $(\z,w)$ of
(\ref{eq10}). We will show the existence of a  pair of leader control
$(\overline{\f},\overline{g})$ solution of the following problem:
\begin{equation} \label{eq61}
\inf_{(\f,g)\, \in\, \mathcal{U}_{ad}} \frac{1}{2}\int_{0}^{T} \int_{\mathcal{O}}|\det K(t)|\big(|\f|^2 + |g|^2\big)dydt,
\end{equation}
where $\mathcal{U}_{ad}$ is the set of admissible controls
\begin{equation*}
\mathcal{U}_{ad} = \{(\f,g) \in \mathbf{L}^2\big(\mathcal{O} \times (0,T)\big) \times L^2\big(\mathcal{O} \times (0,T)\big); (\z,w) \ \text{solution of (\ref{eq10}) satisfying (\ref{en9})} \}.
\end{equation*}

For this, we will use  a duality argument due to Fenchel and Rockfellar \cite{R}
(cf. also  \cite{Bre}, \cite{EK}). Before, we recall the following definition from
convex analysis:

\begin{definition}\label{def123} Let E be a real linear locally convex space and  E' be its dual space. Consider any function $G: E \rightarrow ( -\infty, +\infty]$. The
function $G^{*}:  E'  \rightarrow ( -\infty, +\infty]$ defined by
$$G^{*}(x') = \sup_{x\,  \in\,  E}\{\langle x', x\rangle - G(x) \}, \;\; x' \in  E'$$
is called the conjugate function of G (sometimes called the Legendre transform
of G).
\end{definition}

The following result holds:
\begin{theorem} \label{theor3}
The optimal leader control, that  is, the leader control which to solve the problem
(\ref{eq61}), is given by $(\overline{\f},\overline{g}) = (\vph \chi_{\mathcal{O}},\psi
\chi_{\mathcal{O}})$, where $(\vph,\psi,\pi, \be^{(i)},\gamma^{(i)},\bar{p}^{(i)})$ is
the  solution of (\ref{e3.3}) associate to the  pair $\left( \kxi,\eta\right)  \in
\mathbf{L}^2(\Omega) \times L^{2}\left( \Omega\right)$ which solves the dual
minimization problem
$$\inf_{(\kxi, \eta)}\Bigg\{\frac{1}{2}\int_{\mathcal{O} \times (0,T)}|\det K(t)|\big(|\vph|^2 + |\psi|^2\big)dydt + \epsilon||(\kxi,\eta)||_{\mathbf{L}^2(\Omega) \times L^2(\Omega)}
 - \big((\kxi,\eta),(\z^T,w^T)\big)_{\mathbf{L}^2(\Omega) \times L^2(\Omega)}\Bigg\}$$
\end{theorem}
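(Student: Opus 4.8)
The plan is to read \eqref{eq61} as a constrained convex minimization and to dualize it by the Fenchel--Rockafellar scheme announced before Definition \ref{def123}. First I would note that, by linearity of the optimality system \eqref{e2.20}, the map $(\f,g)\mapsto(\z(T),w(T))$ is affine and continuous from $\mathbf{L}^{2}(\mathcal{O}\times(0,T))\times L^{2}(\mathcal{O}\times(0,T))$ into $\mathbf{L}^{2}(\Omega)\times L^{2}(\Omega)$; since the normalization of \eqref{e2.20} uses $\z(0)=w(0)=0$ and (by the reduction in the proof of Theorem \ref{theorem 3.2}) $\z_{i,d}=w_{i,d}=0$, its constant part vanishes and the map is in fact linear, say $L$. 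Introduce $F_{1}(\f,g)=\tfrac12\int_{\mathcal{O}\times(0,T)}|\det K|\,(|\f|^{2}+|g|^{2})\,dy\,dt$ and let $F_{2}$ be the indicator function of the ball $B(0,\epsilon)\subset\mathbf{L}^{2}(\Omega)\times L^{2}(\Omega)$, so that \eqref{en9} reads $L(\f,g)-(\z^{T},w^{T})\in B(0,\epsilon)$ and the primal value is $\inf_{(\f,g)}\{F_{1}(\f,g)+\widetilde{F}_{2}(L(\f,g))\}$ with $\widetilde{F}_{2}(\cdot)=F_{2}(\,\cdot-(\z^{T},w^{T}))$. The hypotheses of the Fenchel--Rockafellar theorem are met: $F_{1}$ and $\widetilde{F}_{2}$ are convex and lower semicontinuous, $F_{1}$ is finite and continuous everywhere, and the approximate controllability of Theorem \ref{theorem 3.2} makes the reachable states dense, hence there is a control whose final state lies in the interior of the target ball, which is exactly the point where $\widetilde{F}_{2}$ is continuous that the constraint qualification requires.

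Next I would compute the three ingredients of the dual. Identifying the adjoint $L^{*}$ is the conceptual core, and it is already done: multiplying \eqref{e2.20} by the solution $(\vph,\psi,\pi,\be^{(i)},\gamma^{(i)},\bar{p}^{(i)})$ of the backward system \eqref{e3.3} with terminal data $(\kxi,\eta)$ and integrating by parts is precisely the computation \eqref{mat10}--\eqref{e3.23}, whose outcome $(\z(T),\kxi)_{\mathbf{L}^{2}(\Omega)}+(w(T),\eta)_{L^{2}(\Omega)}=(\vph,\f\chi_{\mathcal{O}})_{\mathbf{L}^{2}(Q)}+(\psi,g\chi_{\mathcal{O}})_{L^{2}(Q)}$ exhibits $L^{*}(\kxi,\eta)$ as the trace $(\vph\chi_{\mathcal{O}},\psi\chi_{\mathcal{O}})$ of the adjoint state on $\mathcal{O}\times(0,T)$. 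The conjugate of the quadratic $F_{1}$, evaluated along $L^{*}(\kxi,\eta)$, produces the term $\tfrac12\int_{\mathcal{O}\times(0,T)}|\det K|\,(|\vph|^{2}+|\psi|^{2})$, while the conjugate of the indicator of a ball is its support function, so that $\widetilde{F}_{2}^{*}(-(\kxi,\eta))=\epsilon\,\|(\kxi,\eta)\|_{\mathbf{L}^{2}(\Omega)\times L^{2}(\Omega)}-\big((\kxi,\eta),(\z^{T},w^{T})\big)$. Assembling $-\inf_{(\kxi,\eta)}\{F_{1}^{*}(L^{*}(\kxi,\eta))+\widetilde{F}_{2}^{*}(-(\kxi,\eta))\}$ reproduces exactly the dual functional displayed in the statement.

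The hard part will be the existence of a minimizer $(\kxi,\eta)$ of the dual functional $\mathcal{J}$, which rests on its coercivity; this is where unique continuation, rather than mere density, becomes indispensable. I would argue by contradiction along a sequence with $\rho_{n}:=\|(\kxi_{n},\eta_{n})\|\to\infty$, set $(\hat{\kxi}_{n},\hat{\eta}_{n})=(\kxi_{n},\eta_{n})/\rho_{n}$, and, using that $(\vph,\psi)$ depends linearly and homogeneously on $(\kxi,\eta)$ through \eqref{e3.3}, examine
\[
\frac{\mathcal{J}(\kxi_{n},\eta_{n})}{\rho_{n}}=\frac{\rho_{n}}{2}\int_{\mathcal{O}\times(0,T)}|\det K|\,\big(|\hat{\vph}_{n}|^{2}+|\hat{\psi}_{n}|^{2}\big)\,dy\,dt+\epsilon-\big((\hat{\kxi}_{n},\hat{\eta}_{n}),(\z^{T},w^{T})\big),
\]
where $(\hat{\vph}_{n},\hat{\psi}_{n})$ is the adjoint state of $(\hat{\kxi}_{n},\hat{\eta}_{n})$. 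If the integral stays bounded below by a positive constant the first term diverges; otherwise $(\hat{\vph}_{n},\hat{\psi}_{n})\to 0$ on $\mathcal{O}\times(0,T)$, and passing to a weak limit $(\hat{\kxi},\hat{\eta})$ the corresponding adjoint state vanishes on $\mathcal{O}\times(0,T)$, so the Carleman estimate used to derive \eqref{eq60} forces $(\hat{\kxi},\hat{\eta})=0$, whence the linear term tends to $0$ and the quotient is asymptotically bounded below by $\epsilon>0$. In either case $\mathcal{J}$ is coercive, and being convex, continuous and coercive it attains its infimum at some $(\kxi,\eta)$. Finally I would read off the Fenchel--Rockafellar extremality relation $L^{*}(\kxi,\eta)\in\partial F_{1}(\overline{\f},\overline{g})$, which identifies the primal optimum with the trace of the adjoint state on the control domain, giving $(\overline{\f},\overline{g})=(\vph\chi_{\mathcal{O}},\psi\chi_{\mathcal{O}})$ with $(\vph,\psi,\pi,\be^{(i)},\gamma^{(i)},\bar{p}^{(i)})$ the solution of \eqref{e3.3} associated to the dual minimizer $(\kxi,\eta)$, as claimed.
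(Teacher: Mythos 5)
Your proposal is correct and follows essentially the same route as the paper: the same Fenchel--Rockafellar dualization with $F_1$, the indicator of the target ball, and the solution operator $L$; the same identification $L^*(\kxi,\eta)=(\vph,\psi)$ via the identity (\ref{e3.23}); and the same two-case coercivity argument for the dual functional, resting on the unique continuation (Carleman) step behind (\ref{eq60}). The only differences are details the paper leaves implicit --- your verification of the constraint qualification using the density of reachable states from Theorem \ref{theorem 3.2}, and the final extremality relation recovering $(\overline{\f},\overline{g})$ from the dual minimizer --- which refine rather than alter the argument.
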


\begin{proof} We define the continuous linear operator $L : \mathbf{L}^2\big(\mathcal{O}
\times (0,T) \times L^2\big(\mathcal{O} \times (0,T) \longrightarrow
\mathbf{L}^2(\Omega) \times L^2(\Omega)$  by
\begin{equation*}
L(\f,g) = \big(\z(\cdot,T;\f,g,\v,w),w(\cdot,T;\f,g,\v,w)\big)
\end{equation*}
and we introduce
\begin{equation*}
F_1(\f,g) = \frac{1}{2} \int_{\mathcal{O} \times (0,T)}|\det K(t)|\big(|\f|^2 + |g|^2\big)dydt
\end{equation*}
and
\begin{equation*}
F_2(\kxi,\eta) =
\left\{
\begin{array}{l}
\displaystyle 0, \ \text{if} \ \big(\kxi + \z(T), \eta + w(T)\big) \in B_{\mathbf{L}^2}(\z^T,\epsilon) \times B_{L^2}(w^T,\epsilon),\\
\displaystyle \infty, \ \text{otherwise}.
\end{array}
\right.
\end{equation*}
Then, the problem (\ref{eq61}) becomes equivalent to
\begin{equation} \label{eq62}
\inf_{(\f,g)}\big[F_1(\f,g) + F_2(L(\f,g))\big],
\end{equation}
and, by the Duality Theorem of Fenchel and Rockfellar \cite{R} ( see also
\cite{Bre}, \cite{EK}), we have
\begin{equation} \label{eq63}
\inf_{(\f,g)}\big[F_1(\f,g) + F_2(L(\f,g))\big] = \inf_{(\kxi,\eta)}\big[F_{1}^{*}(L^*(\kxi,\eta)) + F_{2}^{*}(-(\kxi,\eta))\big]
\end{equation}
where $L^* : \mathbf{L}^2(\Omega) \times L^2(\Omega) \longrightarrow
\mathbf{L}^2\big(\mathcal{O} \times (0,T)\big) \times L^2\big(\mathcal{O} \times
(0,T)\big)$ is the adjoint of $L$ and $F_{i}^{*}$ is the conjugate function of $F_{i} (
i = 1,2)$(see  Definition (\ref{def123})).

From (\ref{e3.3}) we obtain (\ref{e3.23}). In this way, we have
\begin{align*}
\big(L^*(\kxi,\eta),(\f,g)\big)_{\mathbf{L}^2(\mathcal{O} \times (0,T)) \times L^2(\mathcal{O} \times (0,T))} &= \big((\kxi,\eta),L(\f,g)\big)_{\mathbf{L}^2(\Omega) \times L^2(\Omega)}\\
& = \big((\vph,\psi),(\f,g)\big)_{\mathbf{L}^2(\mathcal{O} \times (0,T)) \times L^2(\mathcal{O} \times (0,T))}
\end{align*}
and, therefore
\begin{equation} \label{eq64}
L^*(\kxi,\eta) = (\vph,\psi)
\end{equation}
Notice that
\begin{equation}\label{eq641}
F_{1}^{*}(\vph,\psi) = \frac{1}{2}\int_{\mathcal{O} \times (0,T)}|\det K(t)|\big(|\vph|^2 + |\psi|^2\big)dydt
\end{equation}
and
\begin{equation}\label{eq642}
F_{2}^{*}\big(-(\kxi,\eta)\big) = \epsilon||(\kxi,\eta)||_{\mathbf{L}^2(\Omega) \times L^2(\Omega)} - \big((\kxi,\eta),(\z^T,w^T)\big)_{\mathbf{L}^2(\Omega) \times L^2(\Omega)}.
\end{equation}
So, from  (\ref{eq64})-- (\ref{eq642}), (\ref{eq63}) becomes
\begin{equation} \label{eq65}
\inf_{(\f,g)}\big[F_1(\f,g) + F_2(L(\f,g))\big] = -\inf_{(\kxi,\eta)}\Theta(\kxi,\eta),
\end{equation}
where the functional $\Theta: \mathbf{L}^2(\Omega) \times L^2(\Omega) \longrightarrow \Bbb R$ is defined by
\begin{align*}
\Theta(\kxi,\eta) &= \frac{1}{2}\int_{\mathcal{O} \times (0,T)}|\det K(t)|\big(|\vph|^2 + |\psi|^2\big)dydt + \epsilon||(\kxi,\eta)||_{\mathbf{L}^2(\Omega) \times L^2(\Omega)}\\
& - \big((\kxi,\eta),(\z^T,w^T)\big)_{\mathbf{L}^2(\Omega) \times L^2(\Omega)}
\end{align*}

Since the functional $\Theta$ is continuous and strictly convex, it follows by a
result of convex analysis(see, for instance, \cite{Bre}) that to  solve the problem
(\ref{eq65}) and consequently (\ref{eq61}), it is sufficient to prove that the
functional $\Theta$ is coercive. More precisely,
\begin{equation} \label{eq66}
\liminf_{||(\kxi,\eta)||_{\mathbf{L}^2(\Omega) \times L^2(\Omega)} \rightarrow \infty} \frac{\Theta(\kxi,\eta)}{||(\kxi,\eta)||_{\mathbf{L}^2(\Omega) \times L^2(\Omega)}} \geq \epsilon.
\end{equation}
In order, to prove (\ref{eq66}), we consider a sequence $(\kxi_n,\eta_n) \in \mathbf{L}^2(\Omega) \times L^2(\Omega)$ such that
\begin{equation} \label{eq67}
||(\kxi_n,\eta_n)||_{\mathbf{L}^2(\Omega) \times L^2(\Omega)} \rightarrow \infty, \ \ \text{as} \ \ n \rightarrow \infty
\end{equation}

We denote by $(\vph_n,\psi_n,\be_{n}^{(i)},\gamma_{n}^{(i)})$ the corresponding
sequence of solution of (\ref{e3.3}).

For $(\widehat{\kxi}_n,\widehat{\eta}_n) = (\kxi_n,\eta_n)/||(\kxi_n,\eta_n)||_{\mathbf{L}^2(\Omega) \times L^2(\Omega)}$ we write
\begin{equation*}
(\widehat{\vph}_n,\widehat{\psi}_n,\widehat{\be}_{n}^{(i)},\widehat{\gamma}_{n}^{(i)}) = \left(\frac{\vph_n}{||(\kxi_n,\eta_n)||_{\mathbf{L}^2(\Omega) \times L^2(\Omega)}},\frac{\psi_n}{||(\kxi_n,\eta_n)||_{\mathbf{L}^2(\Omega) \times L^2(\Omega)}},\be_{n}^{(i)},\gamma_{n}^{(i)}\right)
\end{equation*}
the solution of (\ref{e3.3}) with $\big(\vph_n(T),\psi_n(T)\big) = (\widehat{\kxi}_n,\widehat{\eta}_n)$. Thus
\begin{align}
\nonumber \frac{\Theta(\kxi_n,\eta_n)}{||(\kxi_n,\eta_n)||_{\mathbf{L}^2(\Omega) \times L^2(\Omega)}} &= \frac{||(\kxi_n,\eta_n)||_{\mathbf{L}^2(\Omega) \times L^2(\Omega)}}{2}\int_{\mathcal{O} \times (0,T)}|\det K(t)|\big(|\widehat{\vph}_n|^2 + |\widehat{\psi}_n|^2\big)dydt \\
\nonumber & + \epsilon||(\widehat{\kxi}_n,\widehat{\eta}_n)||_{\mathbf{L}^2(\Omega) \times L^2(\Omega)} - \big((\widehat{\kxi}_n,\widehat{\eta}_n),(\z^T,w^T)\big)_{\mathbf{L}^2(\Omega) \times L^2(\Omega)}\\
& \geq \frac{||(\kxi_n,\eta_n)||_{\mathbf{L}^2(\Omega) \times L^2(\Omega)}}{2}\int_{\mathcal{O} \times (0,T)}|\det K(t)|\big(|\widehat{\vph}_n|^2 + |\widehat{\psi}_n|^2\big)dydt \label{eq68}\\
\nonumber & + \epsilon - \big((\widehat{\kxi}_n,\widehat{\eta}_n),(\z^T,w^T)\big)_{\mathbf{L}^2(\Omega) \times L^2(\Omega)}
\end{align}

We distinguish the following two cases:
\begin{itemize}
\item[\textbf{Case 1:}] $\displaystyle \liminf_{n \rightarrow \infty} \int_{\mathcal{O} \times (0,T)}|\det K(t)|\big(|\widehat{\vph}_n|^2 + |\widehat{\psi}_n|^2\big)dydt > 0$;

\item[\textbf{Case 2:}] $\displaystyle \liminf_{n \rightarrow \infty} \int_{\mathcal{O} \times (0,T)}|\det K(t)|\big(|\widehat{\vph}_n|^2 + |\widehat{\psi}_n|^2\big)dydt = 0$.
\end{itemize}

In the first case, due to (\ref{eq67}), we get by (\ref{eq68}) that
\begin{equation*}
\liminf_{||(\kxi_n,\eta_n)||_{\mathbf{L}^2(\Omega) \times L^2(\Omega)} \rightarrow \infty}\frac{\Theta(\kxi_n,\eta_n)}{||(\kxi_n,\eta_n)||_{\mathbf{L}^2(\Omega) \times L^2(\Omega)}} = +\infty.
\end{equation*}

Let us now analyze the second case. Let us consider a subsequence (still denoted by the index $n$ to simplify the notation) such that, as $n \rightarrow \infty$,
\begin{equation} \label{eq69}
\int_{\mathcal{O} \times (0,T)}|\det K(t)|\big(|\widehat{\vph}_n|^2 + |\widehat{\psi}_n|^2\big)dydt \rightarrow 0
\end{equation}
and
\begin{equation} \label{eq70}
(\widehat{\kxi}_n,\widehat{\eta}_n) \rightarrow (\widehat{\kxi},\widehat{\eta}) \ \text{weakly in} \ \mathbf{L}^2(\Omega) \times L^2(\Omega).
\end{equation}

Consequently
\begin{equation*}
(\widehat{\vph}_n,\widehat{\psi}_n,\widehat{\be}_{n}^{(i)},\widehat{\gamma}_{n}^{(i)}) \rightarrow (\widehat{\vph},\widehat{\psi},\widehat{\be}^{(i)},\widehat{\gamma}^{(i)}) \ \text{weakly} \ L^2\big(0,T;(\mathbf{V} \times H_{0}^{1}(\Omega))^2\big),
\end{equation*}
where $(\widehat{\vph},\widehat{\psi},\widehat{\be}^{(i)},\widehat{\gamma}^{(i)})$ is the solution of (\ref{e3.3}) with $\big(\widehat{\vph}(T), \widehat{\psi}(T)\big) = \big(\widehat{\kxi},\widehat{\eta}\big)$.\\
According to (\ref{eq69}) we deduce that
\begin{equation*}
(\widehat{\vph},\widehat{\psi}) \equiv 0 \ \ \text{in} \ \ \mathcal{O} \times (0,T).
\end{equation*}

In this way, following the same arguments to obtain (\ref{eq60}), we have
\begin{equation*}
(\widehat{\vph},\widehat{\psi}) \equiv 0 \ \text{in} \ Q
\end{equation*}
and, therefore, $(\widehat{\kxi},\widehat{\eta}) \equiv 0$. Thus, from (\ref{eq70}) we deduce
\begin{equation} \label{eq71}
(\widehat{\kxi}_n,\widehat{\eta}_n) \rightarrow 0 \ \text{weakly in} \ \mathbf{L}^2(\Omega) \times L^2(\Omega).
\end{equation}
As a consequence of (\ref{eq71}), we can take the limit in (\ref{eq68}) to obtain
(\ref{eq66}). This completes the proof. \end{proof}

\section{Conclusions}\label{sec5}

The main achievements obtained in this article were  the existence and
uniqueness of Nash equilibrium and its characterization, the approximate
controllability of the linearized micropolar system with respect to the leader
control  and the existence and uniqueness  of the Stackelberg-Nash problem,
where the optimality system for the leader is given.

The results obtained  can generate several interesting problems, generalizing or
improving  results obtained  in other models, for instance, the problem of
extending the results obtained  here  for  the nonlinear micropolar fluids system.

In the article \cite{RA2}   it is called the attention for the Nash equilibrium
problem for systems governed by  nonlinear partial differential equations, in
particular for the Navier-Stokes and Burgers equations. For the Burgers equation,
the  problem was solved, but for the Navier-Stokes ones the problem is quite
difficult from both  mathematical and  numerical point of view. To study Nash
equilibria associated to  Navier-Stokes equations and their variants  are
interesting open problems.

We also mention  the work by Ramos and Roubicek \cite{RR}, where the authors
studied a  nonlinear PDE model from a theoretical and  numerical point of view.
\vspace{1cm}

{\bf Acknowledgements}  The author wants to express his gratitude to the
anonymous reviewers for their questions and commentaries; they were very
helpful in improving this article. The author also thanks Newton Santos for his
comments on the manuscript.

\end{document}